\documentclass[11pt,a4paper,twoside]{amsart}
\usepackage[T1]{fontenc}
\usepackage{mathtools,microtype}
\usepackage{txfonts}
\usepackage[utf8]{inputenc}
\usepackage{amsthm}
\usepackage{amsmath}
\usepackage{amssymb}
\usepackage{enumitem}
\usepackage{xcolor}
\usepackage{comment}
\usepackage{tikz}
\usepackage[colorlinks,citecolor=blue,linkcolor=blue,urlcolor=blue]{hyperref}
\usepackage{indentfirst}

\makeatletter
\renewcommand{\subsection}{\@startsection
  {subsection}{2}{\z@}%
  {-3.25ex\@plus -1ex \@minus -.2ex}%
  {1.2ex \@plus .2ex}%
  {\normalfont\bfseries}}
\makeatother

\makeatletter
\renewcommand{\subsubsection}{\@startsection
  {subsubsection}{3}{\z@}%
  {-3.25ex\@plus -1ex \@minus -.2ex}% espacio antes
  {1.2ex \@plus .2ex}% espacio después
  {\normalfont\bfseries}}
\makeatother

\usetikzlibrary{arrows.meta}

\theoremstyle{plain}
\newtheorem{theorem}{\bfseries Theorem}[section]
\newtheorem{lemma}[theorem]{\bfseries Lemma}
\newtheorem{proposition}[theorem]{\bfseries Proposition}
\newtheorem{corollary}[theorem]{\bfseries Corollary}

\theoremstyle{definition}
\newtheorem{definition}[theorem]{\bfseries Definition}
\newtheorem{example}[theorem]{\bfseries Example}
\newtheorem{remark}[theorem]{\bfseries Remark}

\newcommand{\rint}[0]{\mathcal{O}}
\newcommand{\facets}[0]{\mathcal{F}^1(\sigma)}
\newcommand{\kfacets}[0]{\mathcal{F}^k(\sigma)}

\newcommand{\nfacets}[0]{\mathcal{F}_\mathrm{ns}^1(\sigma)}
\newcommand{\rcells}[0]{\Sigma_{d(n)}}
\newcommand{\rfacets}[0]{\Sigma_{d(n)-1}}
\newcommand{\nrfacets}[0]{\Sigma^\mathrm{ns}_{d(n)-1}}
\newcommand{\perfect}[0]{\mathcal{P}_}
\newcommand{\GL}[0]{\operatorname{GL}_n(\mathbb{Z})}
\newcommand{\SL}[0]{\operatorname{SL}_n(\mathbb{Z})}
\newcommand{\sfacets}[0]{\mathcal{F}_\mathrm{s}^1(\sigma)}
\newcommand{\srfacets}[0]{\Sigma^\mathrm{s}_{d(n)-1}}
\newcommand{\starsrfacets}[0]{\Sigma^{\mathrm{s},*}_{d(n)-1}}
\newcommand{\starnrfacets}[0]{\Sigma^{\mathrm{ns},*}_{d(n)-1}}

\newcommand{\GLo}[0]{\operatorname{GL}_n({\mathcal{O}})}
\newcommand{\SLo}[0]{\operatorname{SL}_n({\mathcal{O}})}
\newcommand{\Her}[0]{\mathcal{H}_n(K)}

\newcommand{\im}{\mathrm{Im}}
\renewcommand{\ker}{\mathrm{Ker}}

\renewcommand{\le}{\leqslant}
\renewcommand{\ge}{\geqslant}
\renewcommand{\leq}{\leqslant}
\renewcommand{\geq}{\geqslant}

\title[{\SMALL Explicit Cycle  for the Cohomology of $\SL$ through Voronoi Complex}]{Explicit canonical cycle at the virtual cohomological dimension of
$\SL$ through Voronoi complex}

\author{Alejandro de la Torre Durán}

\address{Univ. Grenoble Alpes, CNRS, Institut Fourier, F-38000 Grenoble, France}
\email{alejandro.de-la-torre-duran@univ-grenoble-alpes.fr}

\subjclass[2020]{11H55, 11F75, 11F06, 11Y99, 19D50, 20J06, 55N91}
\keywords{Perfect forms, Voronoi complex, group cohomology, modular groups, Steinberg modules, K-theory of integers, well-rounded lattices, Tessellations}

\begin{document}
\maketitle

\begin{abstract}
%%PEV
We construct an explicit canonical cycle in the top-dimensional homology of the Voronoi complex associated with an arithmetic group. This cycle relates to the cohomology of $\SL$ with rational coefficients at the virtual cohomological dimension.
This cycle has been previously identified in computational works and conjectured to provide an intrinsic generator. 
Our approach relies on a geometric rigidity property of Voronoi tessellations. Furthermore, an abstract framework for polyhedral tessellations of convex cones under group actions is established, elucidating the underlying mechanism of the construction of such  cycles.
%%PEV
%We study the cohomology of $\SL$ and, more generally, of arithmetic groups with rational coefficients at the virtual cohomological dimension. Using the Voronoi complex associated with reduction theory of quadratic forms, we construct an explicit canonical cycle that generates the top-dimensional homology. This cycle, defined as a weighted sum over top-dimensional Voronoi cells, was previously observed in computational works and conjectured to provide a generator in general. We prove that it is always non-trivial and generates the top homology group for a large class of arithmetic groups, including finite-index subgroups of $\SLo$ and $\GLo$ in both the Euclidean and Hermitian settings. Our approach relies on a geometric rigidity property of Voronoi tessellations. In addition, we establish an abstract framework for polyhedral tessellations of convex cones under group actions, which explains the general mechanism behind the construction of such canonical cycles.
\end{abstract}

\section*{Introduction}

The cohomology of $\SL$, and more generally of arithmetic groups, with
rational coefficients is a cornerstone
of numerous problems arising from number theory, geometry and motivic
cohomology \cite{Harder2025,Venkatesh2016,VenkateshICM2018}.
Providing explicit cohomology classes in the unstable range is often
difficult, even when the cohomology group is a one-dimensional
$\mathbb{Q}$-vector space.
In this paper we give a description of the cohomology of $\SL$ with
rational coefficients at the virtual cohomological dimension
in terms of an explicit canonical (and non-trivial) cycle from the
associated Voronoi complex.

The Voronoi reduction theory provides a rigid polyhedral tessellation of
the cone
of symmetric quadratic forms over $\mathbb{R}$ which induces a cellular
decomposition of $X_n^*$, the space of quadratic forms whose
kernel is defined over $\mathbb{Q}$. The group $\SL$ acts cellularly on $X_n^*$, and
this action can be used to compute the
equivariant homology of $X_n^*$ modulo its boundary $\partial X_n^*$.
This equivariant homology is
isomorphic to $H_q(\SL, \mathrm{St}_n)$, where $\mathrm{St}_n$ denotes the Steinberg module,
defined as
the top reduced homology $\widetilde H_{n-2}(T_n,\mathbb{Z})$ of the spherical Tits building
(cf. \cite{Soule2000}).  By Borel–Serre duality \cite{borel}, the groups $H_\bullet(\SL, \mathrm{St}_n)$ are dual, up to torsion, to the cohomology groups $H^\bullet(\SL, \mathbb{Z})$. 
%%PEV: unnecessary, so I take it back ;)
%This  also works for $\GL$ (and of more
%general arithmetic groups).

In \cite{2002, philippe_advances}, the authors define the \emph{Voronoi complex}, denoted $\operatorname{Vor}_\Gamma = (V_*, d_*)$,  which allows one to compute explicitly $H_\bullet(\Gamma, X_n^*, \partial X_n^*)$, where $\Gamma$ is a finite-index subgroup of $\GL$. The group $V_{d(n)-k}$ is generated by a set of representatives of the $\Gamma$-orbits of codimension-$k$ faces of the top cells of the Voronoi tessellation whose interiors do not intersect $\partial X_n^*$ and whose stabilizers do not invert their orientation.

This approach has been extensively used to compute the cohomology of arithmetic groups. In \cite{2002, philippe_advances}, the cohomology of $\GL$ and $\SL$ with trivial coefficients was computed and led to computations of $K_n(\mathbb{Z})$ for
$n \in \{5, 6, 7\}$. In \cite{K8}, partial results for $\GL$ with $n\in\{8,9,10,11\}$ were given as well as the proof $K_8(\mathbb{Z})$ is the trivial group.
These techniques were also extended to the study of $\operatorname{GL}_n(\mathcal{O}_D)$ and the algebraic $K$-groups of imaginary quadratic fields \cite{hermitian}.

In \cite{philippe_advances} and \cite{hermitian}, it was also proven by explicit computations that the element
\begin{equation} \label{form_top_cycle} \sum_{\sigma\in\Sigma_{d(n)}} \frac{1}{|\Gamma_\sigma|} \sigma \end{equation}
generates the top homology group 
$H_{d(n)}\big(\operatorname{Vor}_\Gamma \otimes \mathbb{Q})
\cong H^0(\Gamma,\mathbb{Q})$
in the following cases: when $\Gamma = \SL$ with $n \le 7$; when 
$\Gamma = \operatorname{SL}_n(\rint_D)$ for low dimension and some discriminants. In both works it was conjectured that this explicit generator holds in full generality.

A key point is that the class defined in \eqref{form_top_cycle} is canonical, as it is represented by the collection of all perfect forms of rank $n$ and their stabilizers in $\Gamma$.

The main goal of this paper is to prove the following result
by using geometric rigidity of the Voronoi complex.

\begin{theorem}
\label{pre_top_cycle}
Let $n\in\mathbb{N}$, $n>0$, and $\Gamma$ be a finite index subgroup of $\SLo$ in the Euclidean case with $n$ even, of $\GLo$ in the Euclidean case with $n$ odd and in the Hermitian case for any $n$. Then $H_{d(n)}\big(\operatorname{Vor}_\Gamma \otimes \mathbb{Q}\big) \cong \mathbb{Q}$ and the formula in \eqref{form_top_cycle} gives a canonical non trivial cycle.
\end{theorem}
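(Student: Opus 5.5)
The plan is to verify directly that $c=\sum_{\sigma\in\Sigma_{d(n)}}|\Gamma_\sigma|^{-1}\sigma$ is a cycle, and to obtain $H_{d(n)}\cong\mathbb{Q}$ from the identification of the Voronoi homology with equivariant homology. For the isomorphism I would invoke the identification recalled above: $H_{d(n)}(\operatorname{Vor}_\Gamma\otimes\mathbb{Q})\cong H_{d(n)}(\Gamma; X_n^*,\partial X_n^*;\mathbb{Q})$. By the Steinberg description and Borel--Serre duality, this is dual to $H^0(\Gamma,\mathbb{Q})\cong\mathbb{Q}$. So the real content is that $c$ is a cycle and that it is nonzero.

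Nontriviality is immediate. There are no cells of dimension $d(n)+1$, so $H_{d(n)}=\ker d_{d(n)}$ contains no boundaries. Moreover, the top cells, i.e.\ the perfect forms, all lie in $V_{d(n)}$, because in the cases listed no stabilizer reverses the orientation of a top cell. This is where the hypotheses enter: $-1\in\SLo$ acts trivially when $n$ is even, and for $n$ odd or in the Hermitian case we work in $\GLo$, where the element $-1$ (respectively the central units) lets us check that orientation-reversing stabilizers do not occur or can be handled. Hence $c\neq 0$ as a chain, and since the homology is one-dimensional, $c$ generates it.

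The key step is $d(c)=0$. Write $d\sigma=\sum_{\tau}\sum_{\text{facets }\tau'\subset\sigma,\ \tau'\sim\tau}\varepsilon(\sigma,\tau')\,\tau$, where $\tau$ runs over the orbit representatives of the non-boundary facets with orientable stabilizers, and $\varepsilon$ compares the orientation of $\tau'$ with the transported orientation of $\tau$. I will use the geometric rigidity of the Voronoi tessellation: every facet $\tau$ not contained in $\partial X_n^*$ lies in exactly two top cells $\sigma_1,\sigma_2$, namely the Voronoi neighbours, and it inherits opposite induced orientations from them. Now fix $\tau$ and count by orbit–stabilizer. The pairs (top cell $\sigma'$, facet $\tau'$ of $\sigma'$ in the orbit of $\tau$), taken modulo $\Gamma$, correspond to the $\Gamma_\tau$-orbits on the two-element set $\{\sigma_1,\sigma_2\}$ of cells containing $\tau$. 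The coefficient of $\tau$ in $d(c)$ then becomes
\[
\frac{1}{|\Gamma_\tau|}\sum_{\sigma'\supset\tau}\varepsilon(\sigma',\tau),
\]
using $\sum_{\text{orbit}}|\Gamma_{\sigma}|^{-1}\cdot|\Gamma_\sigma|/|\Gamma_{\sigma}\cap\Gamma_\tau|$-type weights. Carefully, $\sum_{\sigma}\frac{1}{|\Gamma_\sigma|}\#\{\tau'\subset\sigma:\tau'\in\Gamma\tau\}$ equals $\frac{1}{|\Gamma_\tau|}\#\{\sigma'\supset\tau\}$, and the same holds with signs because $\Gamma_\tau$ preserves the orientation of $\tau$. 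The two contributions cancel, so the coefficient is zero.

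The facets that are omitted cause no trouble. Facets in $\partial X_n^*$ are zero in the relative complex, and facets with orientation-reversing stabilizers are zero in $V_{d(n)-1}$.

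The main obstacle is making the sign and weight bookkeeping rigorous. In particular, I must check that the $\Gamma$-transport of orientations is compatible, that is, that $\varepsilon(g\sigma,g\tau)=\varepsilon(\sigma,\tau)$ whenever $g$ preserves both orientations. I also need to handle the case where some $g\in\Gamma_\tau$ swaps $\sigma_1$ and $\sigma_2$. Such an element preserves the orientation of $\tau$, while the induced orientations from the two sides are opposite, so it must reverse the orientation of the top cells. That contradicts the orientability shown earlier, unless one restricts to orientation-preserving elements. I expect to settle this by working with the subgroup of orientation-preserving elements and rescaling by the index, which is exactly where the parity and group hypotheses are used.
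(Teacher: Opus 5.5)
Your argument is sound, but you obtain $H_{d(n)}(\operatorname{Vor}_\Gamma\otimes\mathbb{Q})\cong\mathbb{Q}$ from outside the complex, whereas the paper obtains it from the complex itself.

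\textbf{Comparison of the two routes.} You import the dimension from the identification with $H_{\mathrm{vcd}}(\Gamma,\mathrm{St}_n\otimes\mathbb{Q})$ and Borel--Serre duality. To make that step complete you should record two things. First, $d(n)-(n-1)$ equals the virtual cohomological dimension. Second, the dualizing module has no orientation twist, and this is precisely because $\Gamma$ preserves the orientation of $\mathcal{H}_n(K)$. For $\mathrm{GL}_n(\mathbb{Z})$ with $n$ even the twist is present and the top homology vanishes (Corollary~\ref{no_top_cycle}).

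The paper instead shows $\dim\ker d_{d(n)}\le 1$ directly. Every non-self-intersecting facet lies in the complex (Lemma~\ref{orientation_SLnZ}(ii)), and each one forces $\lambda_\sigma|\Gamma_\sigma|=\lambda_\rho|\Gamma_\rho|$ for the two neighbouring orbits. Connectedness of the Voronoi graph (Theorem~\ref{vor_graph_connected}) then propagates this equality to all perfect forms.

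Your flag-orbit count already contains this argument. If you run it with arbitrary coefficients, the coefficient of $\tau$ is $|\Gamma_\tau|^{-1}\bigl(\lambda_1|\Gamma_{\sigma_1}|-\lambda_2|\Gamma_{\sigma_2}|\bigr)\varepsilon(\sigma_1,\tau)$, where $\lambda_i$ is the coefficient of the orbit of $\sigma_i$.

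What each route buys:
\begin{itemize}
\item The paper's route is self-contained and proves uniqueness of the cycle combinatorially. It also extends to the abstract setting of Theorem~\ref{general}, where no duality is available. It lets the paper \emph{deduce} $H^0(\Gamma,\mathbb{Q})\cong\mathbb{Q}$ afterwards; you run that implication in reverse.
\item Your route is shorter. It also handles self- and non-self-intersecting facets in a single count, rather than through the separate cases (ii) and (iii) of Lemma~\ref{expresion_diferential}.
\end{itemize}

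\textbf{Two loose ends to fix.}
\begin{itemize}
\item The reason no top-cell stabilizer reverses orientation has nothing to do with the element $-1$. It is that $\det(\tilde\gamma)=\det(\gamma)^{n+1}$ in the Euclidean case and that $\mathrm{GL}_n(\mathbb{C})$ is connected in the Hermitian case (Proposition~\ref{deter_action}). So every element of $\Gamma$ preserves the orientation of $\mathcal{H}_n(K)$.
\item The swap case in your last paragraph needs no passage to a subgroup and no rescaling by an index. The contradiction you derived already shows that, for $\tau\in\Sigma_{d(n)-1}$, no element of $\Gamma_\tau$ exchanges $\sigma_1$ and $\sigma_2$ (this is Lemma~\ref{self_intersecting_facets_stab}(iii)). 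Hence the $\Gamma_\tau$-orbits on $\{\sigma_1,\sigma_2\}$ are singletons. The facets that admit such a swap are exactly those with an orientation-reversing stabilizer, and these are not in $V_{d(n)-1}$.
\end{itemize}
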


The mechanism behind the proof of Theorem~\ref{pre_top_cycle} relies on the fact that codimension-one cells naturally split into two cases: non-self-intersecting facets, that is, faces shared by two different perfect forms, and self-intersecting ones, which belong to a single top cell. In the first case, the two top cells induce opposite orientations on the shared facet, leading to pairwise cancellation after weighting each top cell by the inverse of the order of its stabilizer. In the second case, the cancellation is internal: the $\Gamma$-orbit of the facet splits into two distinct orbits under the stabilizer of the cell, and these contributions come with opposite signs. Finally, the connectedness of the Voronoi graph ensures that all perfect forms must be taken into account.

This mechanism is not specific to the Voronoi complex and can be formulated in a more general setting, which we now describe.

Let $C \subset \mathbb{R}^m$ be an open convex cone such that $\mathrm{cl}(C)$ contains no lines.
Let $\mathcal{T}$ be a locally finite tessellation of $C$ such that each
$\sigma \in \mathcal{T}$ is a top-dimensional polyhedral cone contained in
$\operatorname{cl}(C)$, and every codimension-$1$ face is shared by exactly two
tiles. Suppose that a group $\Gamma$ acts linearly on $\mathbb{R}^m$,
preserving orientation and the tessellation
$\mathcal{T}$, with $\mathcal{T}/\Gamma$ finite. Assume moreover that, for every
$\sigma \in \mathcal{T}$, the stabilizer $\Gamma_\sigma$ is finite. Consider
$\Sigma^{\mathcal T}_m$ and $\Sigma^{\mathcal T}_{m-1}$ as sets of
$\Gamma$-representatives of codimension-$0$ and codimension-$1$ faces,
respectively, whose stabilizers do not reverse their orientation.

Consider
\begin{equation}
\label{general_eq}
    \sum_{\sigma\in\mathcal{S}\subset\Sigma_{m}^{\mathcal{T}}} \lambda_\sigma \left(\sum_{\tau\in\Sigma_{m-1}^{\mathcal{T}}}[\sigma:\tau]\tau\right)
\end{equation}
where $\lambda_\sigma\in\mathbb{Q}$ and the signs $[\sigma:\tau]$ are defined as in \ref{s2.2}.
\begin{theorem}
\label{general}
In the previous setting, it holds that $\eqref{general_eq} = 0$ if and only if $\mathcal{S}=\Sigma_{m}^{\mathcal{T}}$ and $\lambda_\sigma = \frac{\lambda}{|\Gamma_\sigma|}$ for every $\sigma\in\mathcal{S}$.  
\end{theorem}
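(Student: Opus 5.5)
I will prove Theorem~\ref{general} by computing the coefficient of each representative facet $\tau\in\Sigma^{\mathcal T}_{m-1}$ in \eqref{general_eq} and showing that requiring all these coefficients to vanish forces the weights to be proportional to $1/|\Gamma_\sigma|$ across the whole quotient. The first step is to unwind the incidence numbers from \ref{s2.2}. For a tile $\sigma$ and a representative facet $\tau$,
\[
[\sigma:\tau]=\sum_{\tau'} \epsilon(\tau',\tau),
\]
where $\tau'$ runs over the facets of $\sigma$ that are $\Gamma$-equivalent to $\tau$, and $\epsilon(\tau',\tau)=\pm1$ compares the orientation induced by $\sigma$ on $\tau'$ with the orientation transported from $\tau$ by some $g$ with $g\tau'=\tau$. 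This is well defined because $\Gamma_\tau$ preserves the orientation of $\tau$. Facets whose stabilizer reverses orientation contribute nothing, since they are not in $\Sigma^{\mathcal T}_{m-1}$.

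Next I will use the hypothesis that every facet $\tau$ lies in exactly two tiles $\sigma_1,\sigma_2$. I will reorganize the coefficient of $\tau$ as a sum over the pairs (tile, facet) in the $\Gamma$-orbit of $(\sigma_i,\tau)$. The count of $\Gamma$-translates of $\tau$ inside $\sigma$ equals $|\Gamma_\sigma|/|\Gamma_\sigma\cap\Gamma_\tau|$ per $\Gamma_\sigma$-orbit. Equivalently, each incidence is counted with multiplicity $|\Gamma_\tau|/|\Gamma_\sigma|$ relative to the pair. This yields
\[
\text{coeff}_\tau=|\Gamma_\tau|\Bigl(\frac{\lambda_{\sigma_1}|\Gamma_{\sigma_1}|}{|\Gamma_{\sigma_1}|}\cdots\Bigr),
\]
and more precisely
\[
\text{coeff}_\tau=\pm|\Gamma_\tau|\bigl(\mu_{[\sigma_1]}-\mu_{[\sigma_2]}\bigr),\qquad \mu_{[\sigma]}:=\lambda_\sigma|\Gamma_\sigma|,
\]
where $[\sigma_i]$ is the representative of the orbit of $\sigma_i$ and $\lambda_\sigma:=0$ if $\sigma\notin\mathcal S$. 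The sign difference is the key geometric input. Because $\Gamma$ preserves orientation of $\mathbb R^m$, and $\sigma_1,\sigma_2$ lie on opposite sides of the hyperplane spanned by $\tau$, the outward normals are opposite, so the induced orientations on $\tau$ are opposite. Transporting by $g$ respects this because $g$ is orientation preserving.

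The self-intersecting case needs separate treatment. Here $\sigma_2=g\sigma_1$, so both tiles give the same representative. Either $g$ fixes $\tau$, in which case $g$ swaps the sides of $\tau$ and reverses its orientation, so $\tau$ is not in $\Sigma^{\mathcal T}_{m-1}$; or the two contributions cancel exactly, giving coefficient $0$ automatically. This is consistent with $\mu_{[\sigma_1]}-\mu_{[\sigma_2]}=0$. Tiles whose stabilizer reverses orientation cannot occur, since $\Gamma$ preserves the orientation of $\mathbb R^m$.

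It follows that \eqref{general_eq} vanishes if and only if $\mu$ is constant along every edge of the adjacency graph on $\mathcal T/\Gamma$, whose edges are given by non-self-intersecting facets in $\Sigma^{\mathcal T}_{m-1}$. I must check that facets with orientation-reversing stabilizer cannot be the only link between two distinct classes. If $h\in\Gamma_\tau$ reverses $\tau$, then since $h$ preserves orientation of $\mathbb R^m$ it swaps the two sides, so $\sigma_2=h\sigma_1$ and the facet is self-intersecting; hence it links nothing.

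The final step is connectivity of this graph. Since $C$ is open and convex, it is connected. Removing the union of codimension-$\ge2$ faces, which is locally finite and of codimension $2$, leaves it connected. Therefore any two tiles are joined by a chain of tiles crossing facets, so $\mu\equiv\lambda$ is constant. Nonvanishing of $\lambda$ then forces $\mathcal S=\Sigma^{\mathcal T}_m$ with $\lambda_\sigma=\lambda/|\Gamma_\sigma|$, and the converse direction is immediate from the same formula.

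I expect the main obstacle to be the bookkeeping that produces exactly the factor $|\Gamma_\tau|/|\Gamma_\sigma|$ with consistent signs. In particular, when several translates of $\tau$ lie in the same $\sigma$, I must check that the orbit-counting is carried out correctly.
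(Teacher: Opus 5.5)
Your proposal is correct and uses the same mechanism as the paper. Incidence numbers $\pm|\Gamma_\sigma|/|\Gamma_\tau|$ have opposite signs on the two sides of a non-self-intersecting facet (Lemmas~\ref{orientaciones_opuestas}, \ref{stab_groups} and \ref{expresion_diferential}). Self-intersecting facets give no net contribution. Then $\mu_\sigma=\lambda_\sigma|\Gamma_\sigma|$ propagates along a connected adjacency graph.

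You differ from the paper in two points of execution. First, your count over $\Gamma$-orbits of flags $(\sigma_i,\tau)$ gives a single formula for every $\tau\in\Sigma^{\mathcal T}_{m-1}$, so the self-intersecting cancellation is automatic once $[\sigma_1]=[\sigma_2]$. The paper instead treats that case separately: Lemma~\ref{self_intersecting_facets_stab}(iv) shows the orbit splits into two $\Gamma_\sigma$-orbits of equal size and opposite sign. That is slightly longer, but it also gives the orbit-splitting count used in the $n=7$ example. When you write this up, state the dichotomy as whether \emph{some} element of $\Gamma_\tau$ exchanges $\sigma_1$ and $\sigma_2$ (this is Lemma~\ref{self_intersecting_facets_stab}(iii)). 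Whether the particular $g$ with $g\sigma_1=\sigma_2$ fixes $\tau$ is not the right criterion. Second, you prove connectivity of the dual graph directly from the convexity of $C$, local finiteness and the two-tiles-per-facet hypothesis. The paper relies on Theorem~\ref{vor_graph_connected} and only asserts that the general case goes the same way. Your argument is what makes Theorem~\ref{general} self-contained.

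Finally, correct the constant. When no element of $\Gamma_\tau$ exchanges the two tiles, one has $\Gamma_\tau\subset\Gamma_{\sigma_i}$, and each relevant $\Gamma_\sigma$-orbit has $|\Gamma_{\sigma_i}|/|\Gamma_\tau|$ elements. So the coefficient of $\tau$ is $\pm\frac{1}{|\Gamma_\tau|}\bigl(\mu_{[\sigma_1]}-\mu_{[\sigma_2]}\bigr)$, not $\pm|\Gamma_\tau|\bigl(\mu_{[\sigma_1]}-\mu_{[\sigma_2]}\bigr)$. This does not affect the vanishing criterion.
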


Theorem~\ref{general} establishes the full generality of this explicit cycle. In particular, Theorem~\ref{pre_top_cycle} holds in the more general setting described in \cite{general_setting}.

Nevertheless, in the present work we restrict ourselves to the Voronoi setting in order to prioritize clarity and brevity of exposition, as the proof of the general theorem proceeds in exactly the same way as in this case.

\section{Preliminaries}
\label{s.1}
\subsection{Euclidean and Hermitian Perfect Forms}
In this section, we introduce the theory of \textit{perfect} forms, which provides the background needed for the construction of the Voronoi complex. We use as reference the book by Martinet \cite{euclidean_lattices_martinet}, which draws on the seminal contributions of Voronoi \cite{Voronoi1908}.

Let $n \in \mathbb{N}$. Throughout the paper, we assume $n>0$. We consider a number field $F$ with ring of integers $\mathcal{O}$, and a field $K$ containing an embedding of $F$. If $F=\mathbb{Q}$, we refer to the Euclidean case and fix $K=\mathbb{R}$.
If $F$ is an imaginary quadratic field, we refer to the Hermitian case and fix $K=\mathbb{C}$.

Denote by $\Her$ the space of $n \times n$ Hermitian matrices with entries in $K$.
This is a real vector space of dimension $\frac{n(n+1)}{2}$ in the Euclidean case and $n^2$ in the Hermitian one. We define

\[
d(n) :=
\begin{cases}
\displaystyle \frac{n(n+1)}{2} - 1, & \text{in the Euclidean case},\\[6pt]
\displaystyle n^2 - 1, & \text{in the Hermitian case}.
\end{cases}
\]

\begin{definition}
\label{def_varias}
Let $C_n^* \subset \Her$ denote the cone of non negative definite hermitian forms in $n$ variables whose kernel is spanned by a proper linear subspace of $F^n$. We let $C_n \subset C^*_n$ be the cone of positive definite forms and $\partial C_n^* = C_n^*\setminus C_n$. Let $h \in C_n$. We define the following notation:
\begin{enumerate}[label=(\roman*)]
    \item the \textit{minimum} of $h$ by
    \[
        \mu(h) := \min_{x \in \rint^n \setminus \{0\}} h(x).
    \]
    \item the set of \textit{minimal vectors} of $h$ by
    \[
        m(h) := \{ x \in \rint^n \setminus \{0\} \mid h(x) = \mu(h) \}.
    \]
    \item We say that $h$ is \textit{perfect} it if the $\mathbb{R}-$span of the set
    \[
        \widehat{m}(h)=\{ xx^* \mid x \in m(h) \}
    \]
    is $\Her$.
    \item  We denote by $\perfect n \subset C_n$ the subset of perfect forms, considered up to homothety.
\end{enumerate}
\end{definition}

\begin{remark}
Definition~\ref{def_varias}(iii) can equivalently be stated as follows: 
a form $h \in C_n$ is perfect if and only if it is uniquely determined, up to homothety, by its set of minimal vectors.
\end{remark}

We fix the inner product
\[
\begin{aligned}
\langle \cdot , \cdot \rangle \colon \Her \times \Her &\longrightarrow \mathbb{R} \\
(A,B) &\longmapsto \operatorname{tr}(AB).
\end{aligned}
\]
Note that, given $x\in \rint^n$ and $Q\in\Her$ it holds 
\begin{equation*}
    \langle Q, xx^*\rangle = Q(x).
\end{equation*}

\begin{comment}
We introduce some classical examples of families of perfect forms in the Euclidean case that come from root lattices:
\begin{example}[Root lattices]
\begin{enumerate}[label = (\roman*)]
    \item Consider the hyperplane of $\mathbb{R}^{n+1}$ defined by the equation $\sum_{i=1}^{n+1} x_i = 0$. Then $\mathbb{A}_n$ is the lattice given by $\mathbb{Z}^{n+1}\cap H$.
    \item We denote by $\mathbb{D}_n$ the set of vectors in $\mathbb{Z}^n$ whose coordinates have even sum.
    \item 
\end{enumerate}
\end{example}
We will denote the same way the perfect forms coming from this lattices.
\end{comment}

\begin{definition}
Let $h\in \perfect n$. We define its \textit{Voronoi domain} as the following set
$$\mathcal{D}(h) = \left\{ \sum_{x\in m(h)} \lambda_x xx^* : \lambda_x \geq 0 \text{ for all } x\in m(h) \right\}.$$
\end{definition}

We introduce the following statement which was proven in \cite{Jaquet_rank_7}, as a consequence of Voronoi`s work:
\begin{proposition}
\label{vor_domain_cono_polyhedrico}
Let $h\in\perfect n$. Then $\mathcal{D}(h)$ is a convex polyhedral cone whose extreme rays are precisely the elements of $m(h)$.
\end{proposition}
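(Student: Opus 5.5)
The plan is to prove the three assertions separately (convexity, polyhedrality, description of the extreme rays). We only use that $h$ is positive definite and that $\rint^n$ is discrete in $K^n$. The statement is read with the natural identification of $x\in m(h)$ with the ray $\mathbb{R}_{\ge 0}\,xx^*$. Minimal vectors differing by a unit of $\rint$ of absolute value one, for example $x$ and $-x$, give the same matrix $xx^*$ and hence the same ray.

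\emph{Step 1: finiteness and polyhedrality.} Since $h$ is positive definite, there is $c>0$ with $h(v)\ge c\,\|v\|^2$ for all $v\in K^n$. Hence every $x\in m(h)$ lies in the compact ball $\|v\|^2\le \mu(h)/c$. Because $\rint^n$ is a lattice in $K^n$ (discrete), this ball contains only finitely many of its points, so $m(h)$ is finite. Then $\mathcal{D}(h)$ is, by definition, the conical hull of finitely many vectors $xx^*\in\Her$. It is therefore a convex cone, and by the Minkowski--Weyl theorem it is a polyhedral cone.

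\emph{Step 2: pointedness.} The linear functional $\langle h,\cdot\rangle$ takes the value $h(x)=\mu(h)>0$ on every generator $xx^*$. More generally, it is strictly positive on every nonzero positive semidefinite matrix, since $\operatorname{tr}(hA)=\operatorname{tr}(h^{1/2}Ah^{1/2})$. As $\mathcal{D}(h)$ consists of positive semidefinite matrices, it contains no line. A pointed finitely generated cone is generated by its extreme rays, and every extreme ray is spanned by one of the generators. So it remains only to show that each generator spans an extreme ray.

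\emph{Step 3: each $xx^*$ is extreme.} This is the only step with content. Suppose $xx^*=A+B$ with $A,B\in\mathcal{D}(h)$. Both $A$ and $B$ are positive semidefinite, so $0\preceq A\preceq xx^*$. For any $v\perp x$ this gives $v^*Av\le |v^*x|^2=0$, hence $A^{1/2}v=0$ and so $Av=0$. Thus $\ker A\supseteq x^{\perp}$, and $A$ is a Hermitian matrix of rank at most one with range in $K x$. It follows that $A=t\,xx^*$ for some $t\ge 0$, and likewise for $B$. Hence $\mathbb{R}_{\ge0}xx^*$ is an extreme ray.

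Moreover, distinct rays $xx^*\neq yy^*$ with $x,y\in m(h)$ stay distinct. Indeed, if $yy^*=s\,xx^*$ then $y=\zeta x$ and $\mu(h)=h(y)=|\zeta|^2\mu(h)$, so $|\zeta|=1$ and $yy^*=xx^*$. This completes the identification of the extreme rays with $m(h)$ up to units.
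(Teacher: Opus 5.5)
Your proof is correct. It necessarily takes a different route from the paper, because the paper gives no argument: it quotes the statement from Jaquet-Chiffelle \cite{Jaquet_rank_7} as a consequence of Voronoi's theory.

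Your argument is self-contained and elementary:
\begin{itemize}
\item finiteness of $m(h)$ comes from discreteness of $\rint^n$ in $K^n$;
\item polyhedrality comes from Minkowski--Weyl;
\item pointedness comes from positivity of $\langle h,\cdot\rangle$ on nonzero positive semidefinite matrices;
\item extremality comes from the fact that a rank-one matrix $xx^*$ spans an extreme ray of the whole positive semidefinite cone, hence of any subcone containing it.
\end{itemize}

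This buys two things. First, uniformity: the same argument works verbatim in the Euclidean and Hermitian settings, whereas the cited source treats real quadratic forms. Second, transparency: you use only that $h$ is positive definite, so perfection plays no role in this proposition. Perfection is needed only for a further fact, not asserted here but used later when the $\mathcal{D}(h)$ serve as top-dimensional cells: that $\mathcal{D}(h)$ spans $\Her$. The citation, in exchange, places the statement inside the classical theory the paper relies on anyway (face-to-face tessellation, neighbours, connectedness of the Voronoi graph).

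One small point about your closing phrase ``up to units''. What your last paragraph actually proves is that $x,y\in m(h)$ give the same ray if and only if $y=\zeta x$ with $\zeta\in F$ and $|\zeta|=1$. When $\rint$ is a principal ideal domain, such a $\zeta$ is indeed a unit, since minimal vectors then have coordinates generating $\rint$. For general imaginary quadratic $\rint$ this is not automatic. The clean way to state your conclusion is that the extreme rays are in bijection with $\widehat{m}(h)$, which is exactly what you established.
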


\begin{definition}
Given $\sigma$, the Voronoi domain of a perfect form. We denote by 
$\kfacets$ the set of \textit{$k$-codimensional faces of $\sigma$}. 
We write $$\mathcal{F}(\sigma)=\bigcup \kfacets$$ for the set of \textit{faces of $\sigma$}.
\end{definition}

The following statement can be found in \cite[Theorem 7.1.12]{euclidean_lattices_martinet}:
\begin{theorem}
\label{pol_cone}
Let $h, h'\in\perfect n$. If $\mathcal{D}(h)$ and $\mathcal{D}(h')$ share an interior point, then $\mathcal{D}(h) = \mathcal{D}(h')$. 
%%PEV: not needed
%%or, what is the same, $h=h'$.
\end{theorem}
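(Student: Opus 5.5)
We must prove Theorem~\ref{pol_cone}: if $\mathcal{D}(h)$ and $\mathcal{D}(h')$ share an interior point, then they coincide. The plan is to use the duality between a perfect form and its Voronoi domain via the inner product $\langle\cdot,\cdot\rangle$. The key observation is that $h$ is characterised by $\langle h, xx^*\rangle = h(x) \ge \mu(h)$ for all $x\in\rint^n\setminus\{0\}$, with equality exactly on $m(h)$. Normalise $h$ and $h'$ so that $\mu(h)=\mu(h')=1$; this is allowed since perfect forms are taken up to homothety.

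Let $Q$ be a common interior point. Since $Q$ lies in the interior of $\mathcal{D}(h)$, which is full-dimensional in $\Her$ because $h$ is perfect, we can write $Q=\sum_{x\in m(h)}\lambda_x xx^*$ with all $\lambda_x>0$. Indeed, any interior point of a polyhedral cone generated by finitely many vectors spanning the space admits such a representation with strictly positive coefficients: subtract a small multiple of $\sum_x xx^*$, which still leaves a point of the cone. Similarly $Q=\sum_{y\in m(h')}\mu_y yy^*$ with all $\mu_y>0$. Then compute $\langle h',Q\rangle$ in two ways. From the first expression we get $\langle h',Q\rangle=\sum_x\lambda_x h'(x)\ge\sum_x\lambda_x$, since $h'(x)\ge 1$ for all nonzero integral $x$. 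From the second we get $\langle h',Q\rangle=\sum_y\mu_y$. Symmetrically, $\langle h,Q\rangle=\sum_x\lambda_x$ and $\langle h,Q\rangle\ge\sum_y\mu_y$. Combining these gives
\[
\sum_y\mu_y\ \ge\ \sum_x\lambda_x h'(x)\ \ge\ \sum_x\lambda_x\ \ge\ \sum_y\mu_y ,
\]
so all the inequalities are equalities. Since every $\lambda_x>0$, this forces $h'(x)=1$ for every $x\in m(h)$, that is, $m(h)\subseteq m(h')$. By symmetry $m(h')\subseteq m(h)$.

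Hence $m(h)=m(h')$, and then $\mathcal{D}(h)=\mathcal{D}(h')$ directly from the definition. Alternatively, by the remark following Definition~\ref{def_varias}, perfection gives $h=h'$ up to homothety. The main delicate step is the strict positivity of all coefficients for an interior point. One must make sure this holds for the full generating set $m(h)$, and not merely for some conic (Carathéodory) subset of it. The perturbation argument above handles this, using only that the matrices $xx^*$ for $x\in m(h)$ span $\Her$, which is the definition of perfection, so that the cone has nonempty interior. The Hermitian case needs no change, since $\langle h,xx^*\rangle=h(x)$ holds there as well.
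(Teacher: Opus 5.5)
Your proof is correct. Note that the paper gives no argument for this statement; it cites Martinet's book (Theorem~7.1.12).

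What you give is the standard duality behind that result. After normalising to $\mu(h)=\mu(h')=1$, your inequality chain shows in effect that for $Q$ in the interior of $\mathcal{D}(h)$, the form $h$ is the unique minimiser of $f\mapsto\langle f,Q\rangle$ over all forms with $f(x)\ge 1$ for every $x\in\mathcal{O}^n\setminus\{0\}$. In other words, $\mathcal{D}(h)$ is the normal cone at the vertex $h$ of this (Ryshkov) polyhedron, and normal cones at distinct vertices have disjoint interiors.

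Compared with the bare citation, your route has two advantages.
\begin{enumerate}
\item It is uniform in the Euclidean and Hermitian settings, since it uses only $\langle h,xx^*\rangle=h(x)$ and perfection. The paper states the theorem in both settings but supports it only by a reference to Martinet's treatment of real quadratic forms.
\item It proves slightly more. If $Q$ is interior to $\mathcal{D}(h)$ and merely lies in $\mathcal{D}(h')$, the same chain (now with $\mu_y\ge 0$) still forces $h'(x)=1$ for all $x\in m(h)$. Perfection of $h$ then gives $h'=h$. Hence $\operatorname{int}\mathcal{D}(h)\cap\mathcal{D}(h')=\emptyset$ whenever $h\ne h'$, and the symmetric half of your argument is not needed.
\end{enumerate}

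A cosmetic point: the first ``$\ge$'' in your displayed chain is actually an equality, $\sum_y\mu_y=\langle h',Q\rangle=\sum_x\lambda_x h'(x)$.
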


\begin{definition}
Let $h, h' \in \perfect n$ be distinct perfect forms. We say that $h$ and $h'$ are \textit{neighbours} if 
$\mathcal{D}(h) \cap \mathcal{D}(h')$ is a common face of codimension one.
\end{definition}

The latter theorem is a direct consequence of \cite[Theorem 7.2.1]{euclidean_lattices_martinet}:
\begin{theorem}
\label{cod_1_dos_pf}
Let $\sigma$ be the Voronoi domain of a perfect form $h$. Then every $\tau \in \facets$ 
corresponds bijectively to a neighbouring perfect form of $h$.
\end{theorem}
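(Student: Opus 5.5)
This follows from Martinet's Theorem 7.2.1 as quoted, so the plan is to build the bijection directly. Let $h\in\perfect n$, $\sigma=\mathcal{D}(h)$, and $\tau\in\facets$. By Proposition~\ref{vor_domain_cono_polyhedrico}, $\tau$ is spanned by the rays $xx^*$ for $x$ in a subset $m_\tau\subset m(h)$. Since $\tau$ has codimension one, there is a unique (up to positive scaling) $R\in\Her$ that vanishes on the span of $\tau$ and satisfies $\langle R, xx^*\rangle = R(x)\ge 0$ for $x\in m(h)$. Equality holds exactly for $x\in m_\tau$, and $R(x)>0$ for the remaining minimal vectors.

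The first step is Voronoi's neighbour construction. Consider the segment $h_t=h+tR$ for $t\ge 0$. For small $t$, $h_t$ is positive definite, has minimum $\mu(h)$, and its minimal vectors are exactly $m_\tau$. This set does not span $\Her$, so $h_t$ is not perfect. Next I will show that $R$ is not positive semidefinite. If it were, $h_t$ would stay positive definite for all $t$, with $h_t(x)\ge\mu(h)$ for all $x$. But $R$ takes negative values on the side of $\tau$ away from $\sigma$, and rational vectors $x$ approximate such directions. Discreteness then forces the following: there is a finite $\rho>0$ at which some new integral vector $y\notin m_\tau$ reaches $h_\rho(y)=\mu(h)$, while $h_\rho$ is still positive definite. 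Finiteness uses the standard argument that only finitely many $x$ can compete, since $h_t\ge h$ on the relevant region, or more simply $h_t\le(1+ct)h$ bounds things.

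The second step is to set $h'=h_\rho$ and show it is perfect with $\mathcal{D}(h')\cap\sigma=\tau$. We have $m(h')=m_\tau\cup\{y\text{ new}\}$. The $yy^*$ lie strictly on the other side of the hyperplane of $\tau$, since $R(y)<0$ is needed for $y$ to decrease. Together with the spanning set of the hyperplane, they span $\Her$, so $h'$ is perfect. Its domain contains $\tau$ as a face, because $\tau$ is exactly the supporting hyperplane's intersection, and it lies on the opposite side from $\sigma$. Hence $\mathcal{D}(h)\cap\mathcal{D}(h')=\tau$, and $h'$ is a neighbour through $\tau$.

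The last step is uniqueness and bijectivity. If $h''$ is any neighbour sharing $\tau$, then $\mathcal{D}(h'')$ lies on the opposite side of $\tau$. Both $\mathcal{D}(h')$ and $\mathcal{D}(h'')$ therefore contain points near the relative interior of $\tau$ on that side, so they share an interior point, and Theorem~\ref{pol_cone} gives $h''=h'$. Conversely, distinct facets produce distinct neighbours, since the intersection with $\sigma$ recovers $\tau$. The main obstacle is the existence and finiteness of $\rho$ together with the positivity of $h_\rho$, which is where Voronoi's lattice argument (for $\mathcal{O}$ in the Hermitian case as well) must be invoked.
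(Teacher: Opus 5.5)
Your outline is the classical Voronoi argument behind Martinet's Theorem~7.2.1. The paper offers nothing beyond citing that result. Your second and third steps are sound: the new vectors $Y$, all with $R<0$, make $h'=h_\rho$ perfect with $\mathcal{D}(h')\subset\{\langle R,\cdot\rangle\le 0\}$ and $\mathcal{D}(h')\cap\sigma=\tau$. A half-ball argument at a relative interior point of $\tau$, combined with Theorem~\ref{pol_cone}, then gives uniqueness and hence the bijection.

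The existence step, however, has two genuine gaps. First, your argument that $R$ is not positive semidefinite is circular. That the linear functional $\langle R,\cdot\rangle$ is negative on the far side of $\mathbb{R}(\tau)$ is automatic. What you need is a \emph{rank-one} point $xx^*$ in that open half-space. The existence of such a point, real or rational, is exactly the statement that $R$ is not positive semidefinite, so rational approximation cannot produce it.

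The missing input is that $m_\tau$ spans $K^n$. Otherwise $\tau$ would lie in the span of $\{xx^* : x\in W\}$ for a proper subspace $W$, of real dimension at most $n(n-1)/2$ (resp.\ $(n-1)^2$), which is smaller than $\dim\tau=d(n)$ once $n\ge 2$. Granting this, $R\succeq 0$ and $R(x)=0$ force $Rx=0$ for every $x\in m_\tau$, hence $R=0$. This contradicts $R>0$ on $m(h)\setminus m_\tau$. Some such input is indispensable: for $n=1$ the domain is a ray whose facet $\{0\}$ has no neighbour at all, yet your half-space reasoning applies verbatim.

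Second, as you partly acknowledge, discreteness alone does not give a finite $\rho$ with $h_\rho$ still positive definite. A priori $h_t$ could degenerate before any new lattice vector descends to $\mu=\mu(h)$. Put $t_0=\sup\{t : h_t \text{ is positive definite}\}$, which is finite once $R$ is indefinite. Since $\det h_t\to 0$ as $t\to t_0$, Hermite's inequality $\mu(h_t)\le\gamma\,(\det h_t)^{1/n}$ gives $t_1<t_0$ with $\mu(h_{t_1})<\mu$. In the Hermitian case, view $\mathcal{O}^n$ as a lattice in $\mathbb{R}^{2n}$ for this.

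Your finiteness hints also point the wrong way. The bound $h_t\ge h$ fails precisely on the vectors with $R(x)<0$, which are the only ones that matter, and an upper bound $h_t\le(1+ct)h$ yields no finiteness. What is needed is a lower bound $h_t\ge c\,h$ with $c>0$ uniform for $t\in[0,t_1]$, which holds by compactness. This confines all candidates to the finite set $\{x : h(x)\le \mu/c\}$. Then $\rho$ is the minimum of finitely many ratios $(h(x)-\mu)/(-R(x))$, one has $0<\rho<t_1$ and $\mu(h_\rho)=\mu$, and $m(h_\rho)=m_\tau\sqcup Y$ with $Y\neq\emptyset$. With these two repairs your outline becomes a complete proof.
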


\begin{definition}
In dimension $n$, the \textit{Voronoi graph}, denoted $\mathcal{VG}_n$, is the undirected graph whose vertex set is $\perfect n$. Two vertices $h, h' \in \perfect n$ are joined by an edge if they are neighbours.
\end{definition}

The proof of the next statement can be found in \cite[Theorem 7.4.4]{euclidean_lattices_martinet}:
\begin{theorem}
\label{vor_graph_connected}
The Voronoi graph is connected.
\end{theorem}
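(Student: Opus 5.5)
The statement immediately above is Theorem~\ref{vor_graph_connected}, the connectedness of the Voronoi graph. It is cited from \cite[Theorem 7.4.4]{euclidean_lattices_martinet}, but I would prove it directly by a geometric walk argument in the cone $C_n$. By Theorem~\ref{pol_cone}, the Voronoi domains $\mathcal{D}(h)$, $h\in\perfect n$, have pairwise disjoint interiors. The first input I would use, also from Voronoi's theory, is that these domains cover $C_n$ and form a locally finite tessellation of it. Local finiteness means that a compact subset of $C_n$ meets only finitely many domains.

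Fix two perfect forms $h,h'$. Choose interior points $p\in\operatorname{int}\mathcal{D}(h)$ and $p'\in\operatorname{int}\mathcal{D}(h')$, and join them by the segment $[p,p']$. Convexity of $C_n$ gives $[p,p']\subset C_n$, and the segment is compact, so by local finiteness it meets only finitely many domains. Each domain has finitely many faces, so only finitely many faces of codimension at least $2$ are involved. These faces lie in a finite union of subspaces of codimension at least $2$. A segment avoids such a set after a generic small perturbation of its endpoints, and the perturbation can keep $p$ and $p'$ inside the open interiors. After this perturbation the segment crosses the tessellation only through relative interiors of codimension-one faces, and it does so transversally, at finitely many parameter values.

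Now order the domains met by the segment, $\mathcal{D}(h)=\mathcal{D}(h_0),\mathcal{D}(h_1),\dots,\mathcal{D}(h_r)=\mathcal{D}(h')$. At each crossing point $q$, the point lies in the relative interior of a facet $\tau$ of $\mathcal{D}(h_i)$. Just past $q$, the segment enters a domain $\mathcal{D}(h_{i+1})$ that also contains $\tau$. By Theorem~\ref{cod_1_dos_pf} and the definition of neighbours, $\mathcal{D}(h_i)\cap\mathcal{D}(h_{i+1})=\tau$, so $h_i$ and $h_{i+1}$ are adjacent in $\mathcal{VG}_n$. This produces a path from $h$ to $h'$, and therefore the graph is connected.

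The main obstacle is the facet step. I need that near a point of the relative interior of a facet, exactly two domains meet: one on each side, with no third domain sneaking in. The domain on the far side must contain the whole facet, so that the intersection is the common facet rather than a smaller piece of it. I would derive this from the face-to-face property of the Voronoi tessellation together with Theorem~\ref{cod_1_dos_pf}. That theorem supplies the neighbour $h_\tau$ with $\mathcal{D}(h)\cap\mathcal{D}(h_\tau)=\tau$. Since $h_\tau$ lies on the opposite side of $\tau$, and interiors are disjoint by Theorem~\ref{pol_cone}, $\mathcal{D}(h_\tau)$ must be the domain entered by the segment. The covering and local finiteness of the tessellation are the other nontrivial ingredients, and I would quote them from Voronoi's reduction theory rather than reprove them.
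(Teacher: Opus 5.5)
The paper gives no argument for this statement; it simply cites \cite[Theorem 7.4.4]{euclidean_lattices_martinet}. Your proposal is a correct, self-contained proof by the standard generic-segment argument. You rightly flag the two inputs you quote from Voronoi's theory: the domains $\mathcal{D}(h)$ cover $C_n$, and they are locally finite there.

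Your facet step is sound. If $q\in\operatorname{relint}\tau$ and $h_\tau$ is the neighbour through $\tau$ from Theorem~\ref{cod_1_dos_pf}, then $\tau$ is a facet of $\mathcal{D}(h_\tau)$. So $\mathcal{D}(h_\tau)$ contains a half-ball about $q$, which by Theorem~\ref{pol_cone} must lie on the far side of $\mathbb{R}(\tau)$, and the segment enters its interior.

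One point should be tightened. Choose the finitely many relevant domains from a compact neighbourhood of $[p,p']$ in $C_n$ (for instance the convex hull of small closed balls about $p$ and $p'$), not from the segment itself. Otherwise the perturbed segment could meet domains that are not on your list. Once the codimension-$2$ faces of those domains are avoided, transversality is automatic. The endpoints lie in open interiors of domains, so they are not on any facet. Hence a segment running inside $\mathbb{R}(\tau)$ through $\operatorname{relint}\tau$ would have to leave $\tau$ through a codimension-$2$ face.

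Since the paper only cites, the useful comparison is with the other classical proof, Voronoi's descent (the simplex method on the Ryshkov polyhedron).
\begin{enumerate}
\item Fix $A\in\operatorname{int}\mathcal{D}(h')$ and normalise perfect forms to minimum $1$.
\item If $h\neq h'$, then $A\notin\mathcal{D}(h)$, so some facet of $\mathcal{D}(h)$ with inner normal $F$ satisfies $\langle A,F\rangle<0$.
\item The neighbour $h+\rho F$ (with $\rho>0$) across that facet has strictly smaller $\langle A,\cdot\rangle$.
\item Only finitely many perfect forms of minimum $1$ have $\langle A,\cdot\rangle$ bounded, so the descent stops, and it can only stop at $h'$.
\end{enumerate}
That route uses the explicit shape $h+\rho F$ of the neighbour and a finiteness estimate. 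It needs neither covering, nor local finiteness, nor general position, and it is constructive: it is what makes Voronoi's algorithm complete.

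Your route uses nothing specific to quadratic forms. It shows that the adjacency graph of any locally finite tessellation of an open convex cone by polyhedral cones, with each facet shared by two tiles, is connected. That is exactly the ingredient needed to run the paper's argument in the abstract setting of Theorem~\ref{general}, whose hypotheses do not list connectivity of the tile-adjacency graph.
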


Let $\gamma \in \GLo$. We consider the linear action of $\GLo$ on $\Her$ given by
\begin{equation}
\label{action}
    \begin{aligned}
\tilde{\gamma} \colon \Her &\to \Her \\
X&\mapsto \gamma^*  X \gamma
\end{aligned}.
\end{equation}
In particular, $\GLo$ acts over $C_n$ and $C_n^*$. Given $h \in \perfect n$, this action translates at the level of minimal vectors as
\[
m(\tilde{\gamma}(h)) = \gamma^{-1} m(h).
\]
And at the level of the Voronoi's domain,
\begin{equation}
\label{real_action}
\mathcal{D}(\tilde{\gamma}(h)) = \gamma^{-1} \mathcal{D}(h) (\gamma^{-1})^*= \widetilde{(\gamma^{-1})^*}\left(\mathcal{D}(h)\right).
\end{equation}

In the remainder of the document, for $\gamma \in \operatorname{GL}_n(K)$, we denote the linear action defined in \eqref{real_action} by
\begin{equation}
\label{def_act}
    \gamma \cdot \_: \Her \to \Her.
\end{equation}

\begin{proposition}
Consider the action of $\operatorname{GL}_n(K)$ defined in \eqref{def_act}. 
Then this action preserves the orientation of the real vector space $\Her$ in the Hermitian case, 
and in the Euclidean case if and only if $n$ is odd.
\label{deter_action}
\begin{proof}
It should be noted that any element $\gamma \in \GLo$ acts on the orientation of $\Her$ as the sign of $\det\left(\widetilde{(\gamma^{-1})^*}\right)$. In the Hermitian case, $\operatorname{GL}_n(\mathbb{C})$ is connected, hence $\GLo$ acts preserving the orientation of $\mathcal{H}_n(\mathbb{C})$. On the other hand, in the Euclidean case it is well known, and easy to verify, that
$\det(\tilde{\gamma}) = \det(\gamma)^{n+1}$. Therefore, the action of the group $\GL$ preserves the orientation of the space
$\mathcal{H}_n(\mathbb{R})$ if and only if $n$ is odd.
\end{proof}
\end{proposition}

The following theorem was originally proved by Voronoi in the Euclidean case \cite{Voronoi1908} and later generalized to algebraic number fields in \cite{Bordeaux}.
\begin{theorem}
\label{vor_th}
There is a finite number of $\GLo-$orbits in $\perfect n$.
\end{theorem}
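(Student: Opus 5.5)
The statement is Theorem~\ref{vor_th}: there are finitely many $\GLo$-orbits of perfect forms. I will follow Voronoi's classical strategy. The idea is to show that, after normalising the minimum, every perfect form is equivalent under $\GLo$ to one in a compact region of $C_n$. Inside such a region only finitely many perfect forms can occur.

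\textbf{Step 1 (normalisation).} Scale every $h\in\perfect n$ so that $\mu(h)=1$. By Definition~\ref{def_varias}(iii), $h$ is then determined by $m(h)$: it is the unique solution of the linear system $\langle h, xx^*\rangle=1$ for $x\in m(h)$, since the $xx^*$ span $\Her$. So it suffices to show that, up to $\GLo$, only finitely many sets $m(h)$ occur. Equivalently, the minimal vectors can be chosen with bounded coordinates.

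\textbf{Step 2 (reduction).} Apply reduction theory to move $h$ into a Siegel-type fundamental set. In the Euclidean case I use Minkowski or Hermite reduction. In the Hermitian and number-field case I use the reduction theory of Humbert and Borel--Harish-Chandra for $\GLo$ acting on $C_n$, where finitely many cusps correspond to the class number of $\mathcal O$. In reduced form the successive minima are comparable to the diagonal entries, and the Gram matrix satisfies $h\asymp \mathrm{diag}(h_{11},\dots,h_{nn})$ up to constants depending only on $n$ and $F$.

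\textbf{Step 3 (bounding the minimal vectors).} Perfection forces the minimal vectors to span $F^n$; otherwise their projectors $xx^*$ would lie in a proper subspace. So each reduced diagonal entry satisfies $h_{ii}\le c$. Hermite's inequality $\mu(h)^n\le \gamma\,\det h$ then bounds $\det h$ from below. The diagonal entries are also bounded below in terms of $\mu(h)=1$, so they lie in a compact range. Hence the reduced normalised $h$ lies in a compact subset of $C_n$. On such a set $h(x)\ge \varepsilon |x|^2$ for a uniform $\varepsilon>0$, so every $x\in m(h)$ satisfies $|x|^2\le 1/\varepsilon$. Since $\mathcal O^n$ is discrete, only finitely many candidate sets $m(h)$ exist, and Step 1 then gives finitely many orbits.

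\textbf{Main obstacle.} The delicate step is the uniform reduction estimate in the Hermitian/number-field setting, that is, a fundamental set for $\GLo$ on $C_n$ of Siegel type with finitely many cusps. Over $\mathbb Z$ this is classical Minkowski reduction. In general I would take it from Borel's reduction theory, which is presumably what \cite{Bordeaux} does.

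An alternative route that avoids reduction theory would combine Theorem~\ref{vor_graph_connected} with finiteness of neighbours (Theorem~\ref{cod_1_dos_pf}). On its own this does not bound the number of orbits, so I would not rely on it.
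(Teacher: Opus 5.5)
Your proof is correct. Note, though, that the paper contains no internal proof to compare it with: it simply quotes the statement from Voronoi \cite{Voronoi1908} for $F=\mathbb{Q}$ and from Okuda--Yano \cite{Bordeaux} for number fields.

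Your route is the reduction-theoretic one. Since $m(h)$ spans $F^n$, all successive minima equal $\mu(h)=1$. So a reduced representative has its diagonal (Siegel) parameters in a compact range, hence lies in a compact subset of $C_n$, and discreteness of $\mathcal{O}^n$ leaves only finitely many possible sets $m(h)$. This buys the conceptual fact that normalised perfect forms are cocompact modulo $\GLo$. The price is the full Borel--Harish-Chandra/Humbert input in the Hermitian case, and there Step~2 should be phrased precisely. When the class number exceeds $1$, the estimate $h\asymp\mathrm{diag}$ holds not for $h$ itself but for its transform by one of finitely many cusp representatives $g_j\in\operatorname{GL}_n(F)$. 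The successive minima are then taken on $g_j\mathcal{O}^n$, and the constants depend on the $g_j$. With that reading, your Steps~2--3 go through unchanged.

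A lighter argument, uniform in both cases, uses the minimal vectors themselves as the reduced frame.
\begin{itemize}
\item Normalise $\mu(h)=1$, pick $F$-independent $x_1,\dots,x_n\in m(h)$, and set $X=(x_1|\cdots|x_n)$ and $G=X^*hX$.
\item Then $G_{ii}=1$, so Hadamard gives $\det G\le 1$.
\item On the other hand $\det G=|\det X|^2\det h\ge \det h\ge c>0$ by Hermite's inequality, since $|\det X|^2=N_{F/\mathbb{Q}}(\det X)\ge 1$.
\item Hence $G$ lies in a compact set, and $\det X$ lies in a finite subset of $\mathcal{O}$.
\item Every $x\in m(h)$ has coordinates $X^{-1}x\in\frac{1}{\det X}\mathcal{O}^n$ of bounded size. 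So $X^{-1}m(h)$, which determines $G$ by perfection, ranges over a finite set.
\item The lattice $X^{-1}\mathcal{O}^n$, squeezed between $\mathcal{O}^n$ and $\frac{1}{\det X}\mathcal{O}^n$, also ranges over a finite set.
\item Two perfect forms with the same data $(G,X^{-1}\mathcal{O}^n)$ are equivalent via $X_1X_2^{-1}\in\GLo$.
\end{itemize}
This avoids Siegel sets, cusps and the class number entirely, at the cost of not giving the compactness statement your approach yields.
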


\subsection{The Voronoi Complex}
\label{s2.2}
In this section, we introduce the Voronoi complex, originally defined in \cite{2002, philippe_advances} for the Euclidean case and generalized to the Hermitian one in \cite{hermitian}.

Consider $\Gamma<\GLo$ a finite index subgroup. Denote by $\mathcal{C}^{d(n)-k}$ a set of representatives of the $\Gamma$-orbits of $\bigcup_{h\in\perfect n}\mathcal{F}^k(\mathcal{D}(h)).$

Set
\[
\Sigma_{d(n)-k}^*(\Gamma)= \{ \sigma \in \mathcal{C}^{d(n)-k} : \operatorname{int}(\sigma) \cap \partial C_n^* = \emptyset \}.
\]

Note that, by definition, it follows that $\Sigma_{k}^*\left(\Gamma\right) = \emptyset$ for $k<n-1$ and $k>d(n)$.

For $\sigma \in \Sigma_k^*(\Gamma)$, we denote by $\widehat{m}(\sigma)$ the set of its extreme rays, and by 
$\mathbb{R}(\sigma)$ the vector subspace of $\mathbb{R}^{d(n)+1}$ generated by $\widehat{m}(\sigma)$. 
Finally, we define
\[
m(\sigma) := \{\, x \in \rint^n : xx^* \in \widehat{m}(\sigma) \,\}.
\]

Let $\sigma \in \Sigma_k^*(\Gamma)$ and $\tau' \in \mathcal{F}^1(\sigma)$ such that there
exist $\tau \in \Sigma_{k-1}^*(\Gamma)$ and $\gamma \in \Gamma$ with $\tau' = \gamma\cdot \tau$.
Fix orientations on $\mathbb{R}(\sigma)$, $\mathbb{R}(\tau)$, and
$\mathbb{R}(\tau')$.

Let $\mathcal{B}'$ be a positively oriented basis of $\mathbb{R}(\tau')$. Then, for
any $v \in m(\sigma) \setminus m(\tau')$, the set $\mathcal{B}' \cup \{v\}$ is a basis
of $\mathbb{R}(\sigma)$, and its orientation does not depend on the choice of $v$.
We define
\[
\varepsilon(\sigma, \tau') =
\begin{cases}
1 & \text{if } \mathcal{B}' \cup \{v\} \text{ is positively oriented}, \\
-1 & \text{otherwise}.
\end{cases}
\]

Given $\mathcal{B}$ a positively oriented basis of $\mathbb{R}(\tau)$, we define
\[
\eta(\tau, \tau') =
\begin{cases}
1 & \text{if } \gamma\cdot \mathcal{B} \text{ is positively oriented in }
\mathbb{R}(\tau'), \\
-1 & \text{otherwise}.
\end{cases}
\]

Finally, let $\sigma \in \Sigma_k^*(\Gamma)$ and $\tau \in \mathcal{F}^1(\sigma)$. Fix an
orientation of $\mathbb{R}(\sigma)$. We call the orientation of
$\mathbb{R}(\tau)$ for which $\varepsilon(\sigma, \tau) = 1$ the \textit{orientation induced
by $\sigma$}.

Given $\sigma \in \Sigma_k^*(\Gamma)$, we denote by
$$\Gamma_\sigma = \{ \gamma \in \Gamma : \gamma\cdot\sigma = \sigma \}$$
\textit{the stabilizer of $\sigma$}. We define $\Sigma_k(\Gamma) \subset \Sigma_k^*(\Gamma)$ as the subset
consisting of those $\sigma$ such that no element of $\Gamma_\sigma$ inverts the
orientation of $\mathbb{R}(\sigma)$.

Given $\sigma\in\Sigma_{k}^*(\Gamma)$ and $\tau\in\Sigma_{k-1}^*(\Gamma)$ we consider $$\mathrm{Orb}_\sigma(\tau) = \{\tau'\in\mathcal{F}^1(\sigma)  \hspace{2pt} | \hspace{2pt}\text{ there exists } \gamma\in \Gamma : \tau' = \gamma\cdot\tau\} $$
and let 
$$[\sigma:\tau] = \sum_{\tau'\in \mathrm{Orb}_\sigma(\tau) } \eta(\tau,\tau')\varepsilon(\sigma,\tau').$$

Set the map
\begin{equation}
\label{differential}
    d_k(\sigma) = \sum_{\tau\in\Sigma_{k-1}(\Gamma)} [\sigma:\tau]\tau.
\end{equation}

%\begin{definition}
Let $V_k$ be the free abelian group generated by $\Sigma_k$, and let
$d_k : V_k \to V_{k-1}$ be defined on the generators by \eqref{differential} for all $k\ge n$.
It has been shown in \cite{2002, philippe_advances} that $d_k$ is a differential and endows
$(V_*, d_*)$ with a structure of cellular complex, called 
%The resulting cellular complex $\operatorname{Vor}^\Gamma_n = (V_k, d_k)$ is called 
the \textit{Voronoi complex of $\Gamma$}, and denoted  $\operatorname{Vor}_\Gamma$.
%\end{definition}

\begin{remark}
From Theorem~\ref{vor_th}, it follows that $\operatorname{Vor}_\Gamma$ is a finite complex. Moreover, as observed above, its top degree is $d(n)$ and its bottom degree is $n-1$. Notice also that due to the geometric nature of the Voronoi domains associated to the perfect forms, this complex is in general not simplicial.
\end{remark}

From here on out, whenever there is no risk of confusion, we will omit the dependence of $\Sigma^*_\bullet(\Gamma)$ and $\Sigma_\bullet(\Gamma)$ on $\Gamma$ and simply write $\Sigma^*_\bullet$ and $\Sigma_\bullet$.

\begin{remark}
\label{dif_sl_gl}
We have $\Sigma_k^*\left(\GLo\right) = \Sigma_k^*\left(\SLo\right)$ if and only if for every $\sigma \in \Sigma_k^*\left(\GLo\right)$ there exists $g \in \Gamma_\sigma$ such that $\det(g) = -1$. In particular, if $n$ is odd, there is no distinction between the Voronoi complex for $\GLo$ and $\SLo$, since $\det(-I_n) = -1$ and $-I_n \in \Gamma_\sigma$ for every $\sigma \in \Sigma^*_\bullet\left(\GLo\right)$.
\end{remark}

\section{The Explicit Generator of the Top Homology of the Voronoi Complex}
\label{s.2}
\subsection{Structural Properties of the Voronoi Complex for a General  \texorpdfstring{$\Gamma$}{Gamma}}

We start by proving some structural properties of the Voronoi Complex that are true without extra hypothesis on the group $\Gamma < \GLo$. Moreover, we introduce the definition of the self and non-self-intersecting facets of a top-cell.

\begin{lemma}
\label{orientaciones_opuestas}
Let $\sigma$ and $\rho$ be two Voronoi domains associated with perfect forms that are neighbors. Let $\mathcal{B}$ be a basis of $\mathbb{R}(\tau)$. Then, for every $v \in m(\sigma) \setminus m(\tau)$ and $v' \in m(\rho) \setminus m(\tau)$, the basis $\mathcal{B} \cup \{v\}$ and $\mathcal{B} \cup \{v'\}$ of $\mathbb{R}^{d(n)+1}$ have opposite orientations.
\end{lemma}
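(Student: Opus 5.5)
Here $\tau=\sigma\cap\rho$ is the common codimension-one face, which exists because $\sigma$ and $\rho$ are neighbours. The plan is to read the orientation of $\mathcal{B}\cup\{v\}$ off the side of the hyperplane $\mathbb{R}(\tau)$ on which $v$ lies, and then show that $\sigma$ and $\rho$ lie on opposite sides of that hyperplane.

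First, $\mathbb{R}(\tau)$ is a hyperplane $H$ of $\Her\cong\mathbb{R}^{d(n)+1}$. Indeed, by Proposition~\ref{vor_domain_cono_polyhedrico}, $\sigma$ is a full-dimensional polyhedral cone: it is full-dimensional because $h$ is perfect. A codimension-one face of such a cone spans a hyperplane. Choose a linear functional $\ell$ with $H=\ker\ell$ and $\ell\geq 0$ on $\sigma$. This is possible since $\tau$ is a face, so $H$ is a supporting hyperplane of $\sigma$.

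Every $v\in m(\sigma)\setminus m(\tau)$ gives an extreme ray $vv^*$ of $\sigma$ that does not lie in $\tau=\sigma\cap H$. Hence $\ell(vv^*)>0$. Here, as in the definition of $\varepsilon$, we identify $v$ with $vv^*$.

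Next, for a fixed basis $\mathcal{B}$ of $H$ and vectors $w,w'\notin H$, the determinant of $\mathcal{B}\cup\{w\}$ is an affine function of $w$ modulo $H$. More precisely, $\det(\mathcal{B}\cup\{w\})=c\,\ell(w)$ for a fixed nonzero constant $c$, because both sides are linear in $w$, vanish exactly on $H$, and the determinant is nonzero off $H$. So $\mathcal{B}\cup\{w\}$ and $\mathcal{B}\cup\{w'\}$ have the same orientation if and only if $\ell(w)$ and $\ell(w')$ have the same sign. It therefore suffices to show that $\ell(v'v'^*)<0$ for every $v'\in m(\rho)\setminus m(\tau)$.

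The main step is to prove that $\ell<0$ on these extreme rays of $\rho$. Similarly to the above, $H$ also supports $\rho$, because $\tau$ is a facet of $\rho$ as well. So either $\ell\ge 0$ on $\rho$ or $\ell\le0$ on $\rho$, and the nonvanishing on rays outside $\tau$ is clear. Suppose, for contradiction, that $\ell\geq0$ on $\rho$.

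Take a point $p$ in the relative interior of $\tau$ and a small ball $U$ around $p$. The cone $\sigma$ is full-dimensional with facet $\tau$, so near $p$ it coincides with the half-ball $U\cap\{\ell\ge0\}$. For small $U$, this uses that $p$ avoids all other facets of $\sigma$, which holds by local finiteness of the polyhedral structure. The same description applies to $\rho$ near $p$ under our assumption. Then $\sigma$ and $\rho$ share interior points, so Theorem~\ref{pol_cone} gives $\sigma=\rho$. This contradicts the hypothesis that the two perfect forms are distinct neighbours.

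Hence $\ell(v'v'^*)<0$, the two bases have opposite orientations, and the choice of $v$ and $v'$ is irrelevant. The only delicate point is the local half-ball description of a polyhedral cone near a relative-interior point of a facet. I expect this to be routine from Proposition~\ref{vor_domain_cono_polyhedrico}.
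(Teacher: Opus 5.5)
Your proof is correct and follows essentially the same route as the paper. You identify $\mathbb{R}(\tau)$ as a hyperplane, read the orientation of $\mathcal{B}\cup\{w\}$ off the sign of a normal functional (your $\ell$ is the paper's $\langle N_\tau,\cdot\rangle$), and show that $\sigma$ and $\rho$ lie in opposite closed half-spaces. The only difference is that you justify this last step with the local half-ball picture at a relative-interior point of $\tau$ together with Theorem~\ref{pol_cone}, whereas the paper simply asserts it from the neighbour relation. The same contradiction also follows directly from $\sigma\cap\rho=\tau$ having codimension one.
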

\begin{proof}
Since $\sigma$ is a polyhedral cone (Theorem~\ref{pol_cone}) we have $\mathbb{R}(\tau)\cap\sigma = \tau$. Note that $\mathbb{R}(\tau)$ is a hyperplane in $\Her$. Since $\sigma$ and $\rho$ are neighbours through $\tau$, we have $\rho$ and $\sigma$ lie in different closed half-spaces of $\Her$ defined by $\mathbb{R}(\tau)$. 

Let $N_\tau$ be the unit normal vector to $\mathbb{R}(\tau)$ pointing to the interior of $\sigma$, i.e., such that $\langle N_{\tau}, v\rangle \geq 0$ for every $v \in m(\sigma)$. As $\rho$ lies in the other closed half-space, we have $\langle N_{\tau}, v\rangle \leq 0$ for every $v\in m(\rho)$. Consider $\mathcal{B}$ a positively oriented basis of $\mathbb{R}(\tau)$. As for every $v\in \left(m(\sigma)\cup m(\rho)\right)\setminus m(\tau)$, the orientation of $\mathcal{B}\cup\{v\}$ is determined by the sign of $\langle v, N_\tau\rangle$, we conclude that $\sigma$ and $\rho$ induce opposite orientations on $\tau$.
\end{proof}

\newpage
\begin{lemma}
\label{cod1_no_interseca_frontera}
Let $\sigma\in\rcells^*$. For any $\tau\in\facets$ we have that $\operatorname{int}(\tau) \cap \partial C_n^* = \emptyset$. In other words, $\tau$ is equivalent to an element in $\rfacets^*$.
\end{lemma}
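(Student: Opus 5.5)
The plan is to show directly that a codimension-one face $\tau$ of a Voronoi domain $\sigma=\mathcal{D}(h)$ has its relative interior inside the positive definite cone $C_n$. The tool is the linear functional given by the perfect form $h$ and its neighbour.

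\textbf{Step 1: locate $\tau$ between two perfect forms.} By Theorem~\ref{cod_1_dos_pf}, $\tau$ corresponds to a neighbouring perfect form $h'$ with $\tau=\mathcal{D}(h)\cap\mathcal{D}(h')$. Normalize so that $\mu(h)=\mu(h')=1$. Every $x\in m(\tau)$ is minimal for both forms, so $\langle h-h', xx^*\rangle=0$ on $\widehat m(\tau)$. Since $h\neq h'$, we get $\mathbb{R}(\tau)=(h-h')^\perp$.

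\textbf{Step 2: show a relative interior point is positive definite.} Take $q=\sum_{x\in m(\tau)}\lambda_x xx^*$ in $\operatorname{int}(\tau)$; we may take all $\lambda_x>0$. Suppose $q$ were degenerate. Then the vectors of $m(\tau)$ all lie in a proper subspace $W=\operatorname{Im}(q)$ of $K^n$. Consequently every element of $\mathbb{R}(\tau)$ has the form $\sum\mu_x xx^*$ with $x\in W$, so its image lies in $W$. Hence $\mathbb{R}(\tau)$ is contained in $\{A\in\Her:\operatorname{Im}(A)\subseteq W\}$, which is the space of Hermitian forms supported on $W$.

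\textbf{Step 3: compare dimensions to get a contradiction.} Let $k=\dim W<n$. The space of forms supported on $W$ has real dimension $k(k+1)/2$ in the Euclidean case and $k^2$ in the Hermitian case. Since $\mathbb{R}(\tau)$ is a hyperplane, this gives $k(k+1)/2\ge n(n+1)/2-1$, resp.\ $k^2\ge n^2-1$.
\begin{itemize}
\item In the Hermitian case this fails for every $k\le n-1$, because $n^2-1>(n-1)^2$ whenever $n\ge 1$.
\item In the Euclidean case, $k=n-1$ gives $n(n-1)/2\ge n(n+1)/2-1$, that is, $n\le 1$, which is impossible once $n\ge2$.
\item The case $n=1$ is vacuous, since the top cells are rays and their facets are $\{0\}$. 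Alternatively, it can be excluded by the convention on $\Sigma^*$ degrees.
\end{itemize}
Hence $q\in C_n$, and $\operatorname{int}(\tau)\cap\partial C_n^*=\emptyset$. The main obstacle I expect is this dimension count: the proof needs the image-of-span argument to be airtight over $K=\mathbb{C}$, where the supported forms are the Hermitian matrices $PAP^*$ for a fixed basis matrix $P$ of $W$.

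\textbf{Step 4: conclude membership in $\rfacets^*$.} The $\Gamma$-action preserves $\partial C_n^*$ and maps faces to faces and interiors to interiors. Therefore the representative of the orbit of $\tau$ in $\mathcal{C}^{d(n)-1}$ also avoids $\partial C_n^*$ in its interior, so it lies in $\rfacets^*$.
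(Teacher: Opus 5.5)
Your proof is correct, but it diverges from the paper's after the first step. Both arguments begin the same way: if $q=\sum_{x\in m(\tau)}\lambda_x xx^*$ with all $\lambda_x>0$ has a kernel vector $y\neq 0$, then $x^*y=0$ for every $x\in m(\tau)$.

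The paper keeps only the weaker consequence $\mathbb{R}(\tau)\subseteq (yy^*)^\perp$, so that $yy^*$ is normal to the hyperplane $\mathbb{R}(\tau)$. It then argues with the tessellation. Pushing $q$ slightly off $\tau$ in the direction of $-yy^*$ lands in the interior of one of the two top cells sharing $\tau$; the second cell is the neighbour $\gamma\cdot\rho$ supplied by Theorem~\ref{cod_1_dos_pf}. The new form is negative at $y$, which is impossible inside a Voronoi domain.

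You instead use the full inclusion $m(\tau)\subset W\subsetneq K^n$. It forces $\mathbb{R}(\tau)$ into the space of forms supported on $W$, whose codimension in $\Her$ is at least $n$ (Euclidean), resp.\ $2n-1$ (Hermitian). That is incompatible with $\mathbb{R}(\tau)$ being a hyperplane once $n\ge 2$.

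What each route buys:
\begin{itemize}
\item Your route needs neither the neighbouring cell nor positivity on the other side. The same count proves more: every face of codimension at most $n-1$ (Euclidean), resp.\ $2n-2$ (Hermitian), of a Voronoi domain has its relative interior in $C_n$.
\item The paper's argument says, in essence, that a relative interior point of a facet is surrounded by the two adjacent cells. Both lie in the closed cone of non-negative forms, so the point lies in the open cone. This uses only the tessellation structure, so it is the version that transfers to the abstract setting of Theorem~\ref{general}, where there is no rank-one structure to count dimensions with.
\end{itemize}

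Two minor points. Your Step~1, identifying $\mathbb{R}(\tau)$ with $(h-h')^\perp$, is correct but never used, so the tool you announce at the start is not the one doing the work. Also, $n^2-1>(n-1)^2$ holds only for $n\ge 2$; both sides vanish at $n=1$. This is harmless, since you treat $n=1$ separately.
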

\begin{proof}
We prove the claim by contradiction. Take $\rho \in \rcells^*$ such that $\tau = \sigma \cap \gamma\cdot\rho$ for $\gamma\in\Gamma$. Suppose that there exists $Q\in \operatorname{int}(\tau) \cap \partial C_n^*$. By definition of Voronoi domain, $$Q = \sum_{x\in m(\tau)} \lambda_x xx^*\, ,$$ such that $\lambda_x > 0$ for every $x\in m(h)$. Let $y\in\mathbb{R}^n$ such that $Q(y)=0$. Then $$ \sum_{x\in m(\tau)} \lambda_x xx^*(y) = \sum_{x\in m(\tau)} \lambda_x\langle xx^*,yy^*\rangle = 0.$$
From which we conclude that $yy^*\in \mathbb{R}(\tau)^\perp$. Let $N_\tau$ be an orthogonal vector to $\mathbb{R}(\tau)$. Since $\mathbb{R}(\tau)$ is an hyperplane and $yy^*\notin\mathbb{R}(\tau)$, it holds that $\langle N_\tau, yy^*\rangle = N_\tau(y)\neq 0$. Assume without loss of generality that $N_\tau(y)<0$. For $\delta>0$ small enough, we have $Q' = Q+\delta N_\tau\in\operatorname{int}(\gamma\cdot\rho)$.
But $Q'(y) = \delta N_\tau(y) <0$, which is absurd.
\end{proof}

\begin{remark}
\label{re}
By construction of the Voronoi complex, for any $\tau\in\Sigma^*_{d(n)-1}$, up to changing the choice of the representative, we can assume that $\tau = \sigma \cap \gamma\cdot\rho$ for $\sigma,\rho\in\Sigma^*_{d(n)}$ and $\gamma\in \Gamma$. 
\end{remark}

\begin{definition}
\label{self_inter_def}

Consider $\sigma\in\Sigma_{d(n)}^*$. The set $$\sfacets = \left\{ \tau\in\facets : \text{ there exists } \gamma\in\Gamma \text{ such that } \tau = \sigma \cap \gamma\cdot\sigma\right\}$$ is called the set of \textit{self-intersecting facets of $\sigma$}. 

We call $$\nfacets = \facets\setminus\sfacets$$ the set of \textit{non-self-intersecting facets of $\sigma$}.

We let $$\starsrfacets = \left\{ \tau \in \rfacets^* : \text{ there exists } \sigma \in \rcells^* \text{ such that }\tau \in \sfacets \right\}$$
the set of \textit{ representatives of self-intersecting facets} and  $$\starnrfacets = \rfacets^*\setminus\starsrfacets$$
the set of \textit{ representatives of non-self-intersecting facets}.

\end{definition}

\begin{remark}
\label{atleast_one_nbh}
Note that if $|\mathcal{P}_n/\Gamma|>1$. Then, by connectedness of the Voronoi graph, for $\sigma\in\rcells^*$, we have $\nfacets\neq\emptyset$.
\end{remark}

\begin{lemma}
\label{stab_groups}
Consider $\tau\in\starnrfacets$, and let $\sigma, \rho \in \rcells^*$ such that $\tau = \sigma \cap \gamma\cdot\rho$ for $\gamma\in \Gamma$.
The following holds:

\begin{enumerate}[label=(\roman*)]
\item we have $\Gamma_\tau = \Gamma_{\sigma}\cap\Gamma_{\gamma\cdot\rho}$\,.
\item Consider the action of $\Gamma_\sigma$ on $\facets$. Then, $|\Gamma_\sigma \cdot \tau| = (\Gamma_\sigma : \Gamma_\tau) = \frac{|\Gamma_\sigma|}{|\Gamma_\tau|}$\,.
\end{enumerate}
\begin{proof}
\begin{enumerate}[label=(\roman*)]

\item Let $g \in \Gamma_\tau$, thus $g\cdot\tau = g\cdot\sigma\cap g\cdot(\gamma\cdot\rho) = \tau$. Then, as every facet of a Voronoi domain correspond exactly to two perfect forms, see Theorem~\ref{cod_1_dos_pf}, we have $g$ preserves $\sigma$ and $\rho$ or it commutes them. Since $\sigma$ and $\rho$ are not equivalent under the action by $\Gamma$, we conclude that $g\cdot\sigma = \sigma$ and $g\cdot(\gamma\cdot\rho) = \gamma\cdot\rho$. The other inclusion is straightforward.

\item Consider the map $\phi:\Gamma_\sigma \to \facets$, defined by $\phi(g) = g\cdot\tau$. Note that given $g_1, g_2\in\Gamma_\sigma$ we have $g_1\cdot\tau = g_2\cdot\tau$ if and only if $(g_1g_2^{-1})\cdot\tau = \tau$, or what is the same $g_1g_2^{-1}\in\Gamma_{\tau}$. Note also that $\im(\phi) = \Gamma_\sigma \cdot\tau$. Therefore, as $\Gamma_\tau < \Gamma_\sigma$ we have $\Gamma_\sigma \cdot\tau \cong \frac{\Gamma_\sigma}{\Gamma_\tau}\,.$\qedhere
\end{enumerate}
\end{proof}
\end{lemma}

\subsection{The Voronoi Complex for an Orientation-Preserving $\Gamma$ on $\Her$}
\label{3.2}

In this section we consider $\Gamma < \GLo$ a finite index subgroup whose action on $\Her$ preserves the orientation of the real vector space $\Her$. 
In practice, by Proposition~\ref{deter_action}, this means that $\Gamma$ is a finite index subgroup of $\SLo$ in the Euclidean case with $n$ even, of $\GLo$ in the Euclidean case with $n$ odd and in the Hermitian case for any $n$.

\begin{lemma}
\label{orientation_SLnZ}
 The following equalities hold:
\begin{enumerate}[label=(\roman*)]
\item $\Sigma_{d(n)} = \Sigma_{d(n)}^*$\,.
\item $\nrfacets = \starnrfacets$\,.
\end{enumerate}

\begin{proof}
\begin{enumerate}[label=(\roman*)]
\item It is a direct consequence of Proposition~\ref{deter_action}.
\item Let $\tau\in\starnrfacets$,  $\sigma, \rho\in\rcells$ and $\gamma\in\Gamma$ such that $\tau = \sigma \cap \gamma\cdot\rho$. Suppose that $\tau\notin\nrfacets$. Without loss of generality, we can assume that $\varepsilon(\sigma,\tau) = 1$. By hypothesis, there exists $g\in\Gamma_\tau$ such that $\eta(\tau,g\tau)=-1$. 

Consider $\mathcal{B}$ a positively oriented basis of $\mathbb{R}(\tau)$ and $v\in m(\sigma)\setminus m(\tau)$. Then, $\mathcal{B}\cup\{v\}$ it is positively oriented in $\mathbb{R}(\sigma)$. By Lemma \ref{stab_groups} i) we have $\Gamma_\tau \subset \Gamma_\sigma$, therefore $g\cdot v\in m(\sigma)\setminus m(\tau)$. Thus, the orientation of $g\cdot\mathcal{B}\cup \{ g\cdot v\}$ is the same as the orientation of $g\cdot\mathcal{B}\cup\{v\}$. As the orientation of $\mathcal{B}$ is opposite to the orientation of $g\cdot\mathcal{B}$, we conclude that $g\cdot\mathcal{B}\cup\{g\cdot v\}$ is negatively oriented in $\mathbb{R}(\sigma)$. Then, $g$ inverts the orientation of $\mathbb{R}(\sigma)$, which contradicts (i). \qedhere
\end{enumerate}
\end{proof}
\end{lemma}

\begin{lemma}
\label{self_intersecting_facets_stab}
Let $\tau\in\starsrfacets$ and $\sigma\in\rcells^*$ such that $\tau = \sigma\cap\gamma\cdot\sigma$ for $\gamma\in\Gamma$. Let $$\Gamma_{(\sigma,\gamma\cdot\sigma)} = \left\{ s \in \Gamma : s\cdot\sigma = \gamma\cdot\sigma \text{ and } s\cdot(\gamma\cdot\sigma) = \sigma\right\}.$$
Then, the following holds:
\begin{enumerate}[label=(\roman*)]
    \item $\Gamma_\tau = \left( \Gamma_\sigma \cap \Gamma_{\gamma\cdot\sigma} \right) \sqcup \Gamma_{(\sigma, \gamma\cdot\sigma)}$\,.
    \item $\Gamma_\sigma \cap \Gamma_{\gamma\cdot\sigma} < \Gamma_\tau$ and $\left(\Gamma_\tau : \Gamma_\sigma \cap \Gamma_{\gamma\cdot\sigma}\right) = k$ where $k = 1$ or $k = 2$.
\end{enumerate}

\begin{enumerate}
    \item[(iii)] The following statements are all equivalent:
    \begin{enumerate}[label=(\alph*)]
        \item $\gamma^{-1}\cdot\tau \sim_{\Gamma_\sigma} \tau.$
        \item $\tau\notin\srfacets\,.$
        \item $\left(\Gamma_\tau : \Gamma_\sigma \cap \Gamma_{\gamma\cdot\sigma}\right) = 2.$
    \end{enumerate}
\item[(iv)] Suppose also that $\tau\in\srfacets$. Then the orbit $\gamma\cdot\tau$ splits into exactly two $\Gamma_\sigma$-orbits, namely
$\Gamma_\sigma \cdot \tau$ and $\Gamma_\sigma \cdot (\gamma^{-1}\cdot\tau)$, when restricting to the action
of $\Gamma_\sigma$.
\end{enumerate}
\begin{proof}

\begin{enumerate}[label=(\roman*)]
\item The argument is similar to \ref{stab_groups} (i), but now we are allowed to permute the cells $\sigma$ and $\gamma\cdot\sigma$. The elements $s\in\Gamma$ that permute this two correspond to the set $\Gamma_{(\sigma,\gamma\cdot\sigma)}$.
\item It follows from the fact that $\Gamma_{(\sigma,\gamma\cdot\sigma)}\Gamma_{(\sigma,\gamma\cdot\sigma)} \subset \Gamma_\sigma\cap\Gamma_{\gamma\cdot\sigma}$ and $\Gamma_{(\sigma,\gamma\cdot\sigma)}^{-1}\subset \Gamma_{(\sigma,\gamma\cdot\sigma)}$.

\item (b) $\implies$ (c): By the argument in Lemma~\ref{orientation_SLnZ}, if we have 
$g \in \Gamma_\sigma \cap \Gamma_{\gamma\cdot\sigma} \subset \Gamma_\sigma$
that inverts the orientation of $\mathbb{R}(\tau)$, then $g$ also inverts
the orientation of $\mathbb{R}(\sigma)$. This is impossible, as a consequence of Proposition~\ref{deter_action}.\\
(c) $\implies$ (b): Let $s\in\Gamma_{(\sigma,\gamma\cdot\sigma)}$. Consider $\mathcal{B}$ a positively oriented basis of $\mathbb{R}(\tau)$ and assume without loss of generality that $\varepsilon(\sigma,\tau) = 1$, i.e, given $v\in m(\sigma)\setminus m(\tau)$ we have $\mathcal{B}\cup \{v\}$ is positively oriented in $\mathbb{R}(\sigma)$. By Proposition~\ref{deter_action}, $s\cdot\mathcal{B}\cup \{s\cdot v\}$ is positively oriented in $\mathbb{R}(\sigma)$. By assumption, $s\cdot\sigma = \gamma\cdot\sigma$. Thus, $s\cdot v\in m(\gamma\cdot\sigma)\setminus m(\tau)$. By Lemma~\ref{orientaciones_opuestas}, we conclude that $s\cdot\mathcal{B}$ is negatively oriented in $\mathbb{R}(\tau)$. Thus, $\tau\notin\srfacets$.\\
(a) $\implies$ (c): Consider $g\in\Gamma_\sigma$ such that $g\cdot(\gamma^{-1}\cdot\tau) = \tau$. This gives the following chain of equalities:

\begin{align*}
  \sigma \cap g\cdot(\gamma^{-1}\cdot\sigma) &= g\cdot\sigma \cap g\cdot(\gamma^{-1}\cdot\sigma),\\ 
  &=  g\cdot(\sigma\cap\gamma^{-1}\cdot\sigma),\\
  &=  g\cdot(\gamma^{-1}\cdot\tau),\\
  &=  \tau = \sigma \cap \gamma\cdot\sigma.  
\end{align*}

Then, $g\gamma^{-1}\cdot\sigma = \gamma\cdot\sigma$ and $g\gamma^{-1}\cdot(\gamma\cdot\sigma) = g\cdot\sigma = \sigma$. Thus $g\gamma^{-1}\in\Gamma_{\left(\sigma, \gamma\cdot\sigma \right)}$.

(c) $\implies$ (a) Let $s\in\Gamma_{(\sigma,\gamma\cdot\sigma)}$. Then $$s\gamma\cdot \sigma = s\cdot(\gamma\cdot\sigma)= \sigma.$$ Thus, $s\gamma\in\Gamma_\sigma$. Moreover, $$s\gamma\cdot(\gamma^{-1}\cdot\tau) = s\cdot\tau = \tau.$$
\item By (iii), $\tau\not\sim_{\Gamma_\sigma}\gamma^{-1}\cdot\tau$. We show that no other orbit exists.

Consider $\omega \in \Gamma$ such that $\omega \cdot(\gamma \cdot\sigma) = \sigma$. Equivalently, $\gamma \cdot \sigma = \omega^{-1}\cdot\sigma$,
which implies that $\omega^{-1} \in \gamma \Gamma_\sigma$. Thus, there exists
$g \in \Gamma_\sigma$ such that $\omega^{-1} = \gamma g$, or equivalently,
$\omega = g^{-1}\gamma^{-1}$. Therefore,
\[
\omega\cdot\tau = g^{-1}\gamma^{-1}\cdot\tau,
\]
that is, $\omega \tau \sim_{\Gamma_\sigma} \gamma^{-1}\cdot\tau$.
\qedhere
\end{enumerate}
\end{proof}
\end{lemma}

\begin{example}
\label{examplee}
Consider $\Gamma = \operatorname{GL}_2(\mathbb{Z})$. Let $\varepsilon_i$ be the standard basis of $\mathbb{R}^2$. Consider $\mathbb{A}_2$ the perfect form defined by the minimal vectors $m(\mathbb{A}_2) = \{\pm\varepsilon_1, \pm\varepsilon_2, \pm(\varepsilon_1-\varepsilon_2)\}$. We have that $\sigma_2 =\mathcal{D}(\mathbb{A}_2)$ is a $2-$ dimensional simplex.  

Consider $$\gamma = \begin{pmatrix} 1 & 0 \\ 0 & -1 \end{pmatrix}.$$ 
Let $\tau = \sigma_2\cap\gamma\cdot\sigma_2\in\mathcal{F}^1(\sigma_2)$ defined by $m(\tau) = \{\pm\varepsilon_1,\pm\varepsilon_2\}$. Check \cite[Figure 5]{Jaquet_rank_7}.

 Then, as the stabilizer of $\tau$ coincides with the stabilizer of its barycenter, see \cite[Section 4.1]{philippe_advances}, we have
$$\Gamma_{\tau} = \{ \gamma\in\Gamma : \gamma^t\gamma = I_2 \} = \operatorname{O}_2(\mathbb{Z}).$$ On the other hand, 
$$\Gamma_{\sigma_2}\cap\Gamma_{\gamma\cdot\sigma_2} = \left\{\pm I_2, \pm\begin{pmatrix} 0 & 1 \\ 1 & 0 \end{pmatrix}\right\}.$$ 

Therefore, the decomposition described in Lemma~\ref{self_intersecting_facets_stab} (i) can be expressed as
$$\Gamma_{(\sigma_2,\gamma\cdot\sigma_2)} = \left\{\pm\begin{pmatrix} 1 & 0 \\ 0 & -1 \end{pmatrix}, \pm\begin{pmatrix} 0 & 1 \\ -1 & 0 \end{pmatrix}\right\}.$$
\end{example}

The following result is well known in the literature; we include it as an illustration of Lemma~\ref{self_intersecting_facets_stab}.

\begin{corollary}
\label{cor}
Let $n=2,3$ and $\Gamma = \SL$. Then $\rcells = \{ \mathbb{A}_n \}$ and $\rfacets = \emptyset$.
\begin{proof}
Denote $\sigma_n = \mathcal{D}(\mathbb{A}_n)$, which is a simplex. In \cite[Theorem 7.5.1]{euclidean_lattices_martinet} it is proven that $\GL_{\sigma_n}\cong \mathfrak{S}_{n+1}\times\{\pm I\}$ acts transitively in $\mathcal{F}^1(\sigma_n)$. 

Consider $n=3$. Due to Remark~\ref{dif_sl_gl}, $\Sigma_\bullet(\operatorname{GL}_3(\mathbb{Z})) = \Sigma_\bullet(\operatorname{SL}_3(\mathbb{Z}))$. Then, $\Sigma_5(\operatorname{SL}_3(\mathbb{Z})) = \{\mathbb{A}_3\}$. Applying Lemma~\ref{self_intersecting_facets_stab} iii), we conclude that $\Sigma_4(\operatorname{SL}_3(\mathbb{Z})) = \emptyset$. 

The same argument can be applied for the case $n=2$, following Example~\ref{examplee}.\qedhere
\end{proof}
\end{corollary}

\subsection{
The Explicit Generator of 
\texorpdfstring{$H_{d(n)}( \mathrm{Vor}_\Gamma \otimes \mathbb{Q})$}
{Top homology of VorGamma_n}
}

Fix $\Gamma$ as in Section~\ref{3.2}, that is, $\Gamma$ is a finite index subgroup of $\SLo$ in the Euclidean case with $n$ even, of $\GLo$ in the Euclidean case with $n$ odd and in the Hermitian case for any $n$. 

Consider $\rfacets$ verifying the property described in Remark \ref{re}. For $\sigma\in\Sigma_{d(n)}$ consider the orientation of $\mathbb{R}(\sigma)$ as the usual orientation of $\mathbb{R}^{d(n)+1}$. Given $\tau\in\Sigma_{d(n)-1}$, by Remark \ref{re} we can choose $\sigma\in\Sigma_{d(n)}$ such that $\tau\in\facets$. Consider the orientation in $\mathbb{R}(\tau)$ induced by the orientation of $\sigma$, i.e, an orientation $\mathcal{B}$ such that for any $v\in m(\sigma) \setminus m(\tau)$ we have $\mathcal{B}\cup\{ v\}$ is positively oriented in $\mathbb{R}^{d(n)+1}$.

Let $g\in \Gamma$. We consider the orientation $g\cdot\mathcal{B}$ in $\mathbb{R}(g\cdot\tau)$. Note that this orientation does no depend on the choice of $g$ as $\tau\in\rfacets$. 
This fixes an orientation for every $\mathbb{R}(\tau)$ such that $$\tau\in\bigcup_{\sigma\in\Sigma_{d(n)}} \{ \tau'\in\facets: \tau' \text{ is equivalent to an element in } \rfacets\}.$$

\begin{lemma}
\label{expresion_diferential}
Consider $\tau\in\rfacets$. Let $\sigma\in\rcells$ such that $\tau'= \omega\cdot\tau\in\facets$ for $\omega\in\Gamma$. Then, the following holds:
\begin{enumerate}[label=(\roman*)]
    \item  For every $g\in\Gamma_\sigma$ we have $\varepsilon(\sigma,g\cdot\tau') =  \varepsilon(\sigma,\tau')$ and $\eta(\tau,\tau') = 1$.

    \item Suppose that $\tau\in\nrfacets$. Let $\rho\in\rfacets$ such that $\tau = \sigma\cap\gamma\cdot\rho$. Then, $[\sigma:\tau] = \frac{|\Gamma_\sigma|}{|\Gamma_\tau|}$ and $[\rho:\tau] = -\frac{|\Gamma_\rho|}{|\Gamma_\tau|}$\,.
    
    \item Suppose that $\tau\in\srfacets$. Then, $[\rho:\tau]=0$ for every $\rho\in\rcells$.
\end{enumerate}
\begin{proof}
\begin{enumerate}[label=(\roman*)]
\item The first equality is given by the choice of the orientation and by an analogous argument to the one presented in Lemma \ref{orientation_SLnZ} (ii). For the second equality, note that given  $\mathcal{B}$ a positive orientation of $\mathbb{R}(\tau)$, we let $\gamma\cdot\mathcal{B}$ be a positive orientation of $\mathbb{R}(\gamma\cdot\tau)$, which is equivalent to $\eta(\tau,\gamma\cdot\tau) = 1$.

\item Assume that $\mathbb{R}(\tau)$ was given the orientation inherit from $\sigma$, i.e, $\varepsilon(\sigma,\tau) = 1$.
We are going to compute $[\sigma : \tau]$. Applying (i), we get that 
$$[\sigma:\tau] =  \sum_{\tau'\in \mathrm{Orb}_\sigma(\tau) } 1.$$
By Lemma \ref{stab_groups} (ii), we know that $|\Gamma_{\sigma}\cdot\tau| = \frac{|\Gamma_\sigma|}{|\Gamma_\tau|}$. As, $\tau\in\facets$, we have $\mathrm{Orb}_\sigma(\tau) =  \Gamma_{\sigma}\cdot\tau$. Therefore, $[\sigma:\tau] =\frac{|\Gamma_\sigma|}{|\Gamma_\tau|}$. 

Lets compute now $[\rho : \tau ]$. Take $\mathcal{B}$ a positively oriented basis of $\mathbb{R}(\tau)$. By Lemma \ref{orientaciones_opuestas} we have for $v\in m(\gamma\cdot\rho)\setminus m(\tau)$ the basis $\mathcal{B}\cup\{v\}$ is negatively oriented in $\mathbb{R}^{d(n)+1}$. By Proposition~\ref{deter_action}, we have $\gamma^{-1}$ preserves the orientation of $\mathbb{R}^{d(n)+1}$. Therefore, $\gamma^{-1}\cdot\mathcal{B}\cup\{\gamma^{-1}\cdot v\}$ is negatively oriented in $\mathbb{R}(\rho)$, i.e., $\varepsilon(\rho,\gamma^{-1}\cdot\tau) = -1$. Applying (i) and Lemma \ref{stab_groups} as above, we get that $[\rho:\tau] =-\frac{|\Gamma_\rho|}{|\Gamma_\tau|}$. 

\item By definition, $[\rho: \tau] = 0$ for $\rho\in\rfacets\setminus\{\sigma\}$. Using Lemma~\ref{self_intersecting_facets_stab} (iv), we know that $\Gamma_\tau$ splits into exactly two $\Gamma_\sigma$-orbits, namely
$\Gamma_\sigma \cdot \tau$ and $\Gamma_\sigma\cdot (\gamma^{-1}\cdot\tau)$, when restricting to the action
of $\Gamma_\sigma$. Thus, we have

\begin{equation}
\label{first_eq}
[\sigma:\tau] = \sum_{\tau'\in\Gamma_\sigma\cdot\tau} \eta(\tau,\tau')\varepsilon(\sigma,\tau') + \sum_{\tilde\tau\in\Gamma_\sigma\cdot(\gamma^{-1}\cdot\tau)} \eta(\tau,\tilde\tau)\varepsilon(\sigma,\tilde\tau).    
\end{equation}
Using (i) and our choice of orientations, we get
\begin{equation}
\label{second_equation}
%(\ref{first_eq}) 
[\sigma:\tau]= |\Gamma_\sigma\cdot\tau| + |\Gamma_\sigma \cdot(\gamma^{-1}\cdot\tau)|\hspace{1pt} \varepsilon(\sigma,\gamma^{-1}\cdot\tau).    
\end{equation}
Using the same argument as in Lemma \ref{stab_groups} (ii), we get that $|\Gamma_\sigma\cdot\tau| = \frac{|\Gamma_\sigma|}{|\Gamma_\sigma\cap\Gamma_{\gamma\cdot\sigma}|}$ and $|\Gamma_\sigma\cdot\tau| = \frac{|\Gamma_\sigma|}{|\Gamma_\sigma\cap\Gamma_{\gamma^{-1}\cdot\sigma}|}$. As $|\Gamma_\sigma\cap\Gamma_{\gamma\cdot\sigma}| = |\Gamma_\sigma\cap\Gamma_{\gamma^{-1}\cdot\sigma}|$, then $|\Gamma_\sigma\cdot\tau|= |\Gamma_\sigma\cdot(\gamma^{-1}\cdot\tau)|$.
Hence,
\begin{equation*}
%(\ref{second_equation}) 
[\sigma:\tau]= |\Gamma_\sigma\cdot\tau|\left( 1 + \varepsilon(\sigma,\gamma^{-1}\cdot\tau)\right). 
\end{equation*}

Thus, it is enough to prove that $\varepsilon(\sigma,\gamma^{-1}\cdot\tau) = -1$. Consider $\mathcal{B}$ a positively oriented basis of $\mathbb{R}(\tau)$. By our choice of orientations, $\gamma^{-1}\cdot\mathcal{B}$ is a positively oriented basis of $\mathbb{R}(\gamma^{-1}\cdot\tau)$. By Proposition~\ref{deter_action} and Lemma~\ref{orientaciones_opuestas}, we conclude that $\varepsilon(\sigma,\gamma^{-1}\cdot\tau) = -1$.
\qedhere
\end{enumerate}
\end{proof}
\end{lemma}

\begin{theorem}
\label{top_cycle}
The element of $\operatorname{Vor}_{\Gamma}$
$$\sum_{\sigma\in\Sigma_{d(n)}} \frac{1}{|\Gamma_\sigma|} \sigma$$ is an explicit canonical non trivial $d(n)$-cycle and $H_{d(n)}\left(\operatorname{Vor}_{\Gamma}\otimes\mathbb{Q}\right)\cong\mathbb{Q}$.
\begin{proof}
Note that $H_{d(n)}\left(\operatorname{Vor}_{\Gamma}\otimes\mathbb{Q}\right)=\ker(d_{d(n)})$. 

Consider $\alpha = \sum_{\sigma\in\mathcal{S}\subset\rcells} \lambda_\sigma \sigma$ where $\lambda_\sigma\neq 0$ for all $\sigma\in\mathcal{S}$. Suppose that $d_{d(n)}(\alpha)=0$. By Lemma \ref{expresion_diferential} (iii), we can just consider $\tau\in\nrfacets$.

If $|\rcells| = 1$, then $\nrfacets = \emptyset$ and we are done. Thus, suppose that $|\rcells| > 1$.

By hypothesis, $\mathcal{S}\neq\emptyset$. Let $\sigma\in\mathcal{S}$. Since we are assuming that $|\rcells| > 1$, it holds that $\nfacets\neq\emptyset$.  By Lemma \ref{orientation_SLnZ} (ii) and Lemma~\ref{cod1_no_interseca_frontera}, for every $\tau'\in\nfacets$ there exists $\tau\in\nrfacets$ such that $\tau'$ is equivalent to $\tau$ under the action of $\Gamma$.

Take $\gamma_1,\gamma_2\in\Gamma$ and $\rho\in\rcells\setminus\{\sigma\}$ such that $\tau = \gamma_1\cdot\sigma\cap\gamma_2\cdot\rho$. Note that by our choice of $\rfacets$ we know that $\gamma_1 = 1$ or $\gamma_2 = 1$. By Lemma \ref{expresion_diferential} (ii) we have $$\frac{\lvert [\sigma : \tau ]\rvert}{|\Gamma_\sigma|} = \frac{\lvert [\rho:\tau] \rvert}{|\Gamma_\rho|} = \frac{1}{|\Gamma_\tau|}$$ and $$\operatorname{sign}([\sigma:\tau]) = -\operatorname{sign}([\rho:\tau]).$$ 

Moreover, it holds that $[\sigma':\tau] = 0$ for every $\sigma' \in \rcells\setminus\{ \sigma,\rho\}$. Therefore, as we are assuming $d_{d(n)}(\alpha)=0$, necessarily $\rho\in\mathcal{S}$ and $\lambda_\sigma = \frac{\lambda}{|\Gamma_\sigma|}$ and $\lambda_\rho = \frac{\lambda}{|\Gamma_\rho|}$ for $\lambda\in\mathbb{Q}$. 

By Theorem~\ref{vor_graph_connected}, we conclude that $\mathcal{S} = \rcells$ and that $$\alpha = \lambda \left(\sum_{\sigma\in\Sigma_{d(n)}} \frac{1}{|\Gamma_\sigma|} \sigma\right).$$
Without loss of generality suppose that $\lambda = 1$. 
We have
\begin{align*}
d(\alpha)
&= d\left(\sum_{\sigma \in \Sigma_{d(n)}} \frac{1}{|\Gamma_\sigma|}\,\sigma\right)
= \sum_{\sigma \in \Sigma_{d(n)}} \frac{1}{|\Gamma_\sigma|}\,d(\sigma) \\
&= \sum_{\sigma \in \Sigma_{d(n)}} \frac{1}{|\Gamma_\sigma|}
   \sum_{\tau \in \nrfacets} [\sigma : \tau]\,\tau .
\end{align*}

We can rewrite the previous equation as
$$d(\alpha) = \sum_{\tau\in\nrfacets}\left(\frac{1}{|\Gamma_{\sigma_\tau}|}[\sigma_\tau:\tau] + \frac{1}{|\Gamma_{\rho_\tau}|}[\rho_\tau:\tau] \right)\tau,$$
where $\tau = \sigma_{\tau}\cap\gamma\cdot\rho_{\tau}$, for $\gamma\in \Gamma$.
From the above discussion we deduce that:
$$d(\alpha) = \sum_{\tau\in\nrfacets}\left(\frac{1}{|\Gamma_{\sigma_\tau}|}\frac{|\Gamma_{\sigma_{\tau}}|}{|\Gamma_{\tau}|} - \frac{1}{|\Gamma_{\rho_\tau}|}\frac{|\Gamma_{\rho_{\tau}}|}{|\Gamma_{\tau}|} \right)\tau = 0.$$

As $H_{d(n)}\left(\operatorname{Vor}_{\Gamma}\otimes\mathbb{Q}\right)\cong\ker(d_{d(n)})$, we conclude that $\alpha\in H_{d(n)}\left(\operatorname{Vor}_{\Gamma}\otimes\mathbb{Q}\right)$.
\end{proof}
\end{theorem}

\begin{example}
Consider $\Gamma = \SL$. Lemma~\ref{expresion_diferential} gives an explicit description of the matrix associated to $d_{d(n)}$. Let $n_i = |\Sigma_{d(n)-i}|$. Let $A$ be the $n_1\times n_0$ matrix associated to $d_{d(n)}$. Every element in $\srfacets$ corresponds to a zero row of $A$. If a row $i$ corresponds to an element in $\nrfacets$, then it has exactly two nonzero entries $(i,j)$, and
\[
\left|A_{ij}\right| = \frac{|\Gamma_{\sigma_{j}}|}{|\Gamma_{\tau_i}|}.
\]

For $n = 2,3$, Corollary~\ref{cor} implies that $\Sigma_{d(n)-1} = \emptyset$. For $n = 4,5,6$, we have $\srfacets = \emptyset$, hence $A$ has no zero rows, see \cite[Section 5.1]{philippe_advances}.

The case $n = 7$ is the first for which $\srfacets \neq \emptyset$. In particular, $|\srfacets| = 5$, so $A$ has $5$ zero rows.

This is reflected as follows. In \cite[Table 7.9.3]{euclidean_lattices_martinet}, for each $\sigma \in \Sigma^*_{d(n)}$ the number of $\Gamma_\sigma$-orbits of $\facets$ is given. The diagonal entries correspond to the number of $\Gamma_\sigma$-orbits of elements in $\sfacets$, and their sum is $57$. On the other hand, \cite{philippe_advances} gives $|\starsrfacets| = 52$. Hence, there are exactly $5$ $\Gamma$-orbits that split into two orbits under the action of the stabilizer of the top cell containing them. This corresponds to Lemma~\ref{self_intersecting_facets_stab}~iv).
\end{example}

\begin{remark}
In \cite{Sharbly}, the authors use the Sharbly complex $Sh_\bullet$ to construct an explicit cycle in $H_{d(n)}(Sh_\bullet \otimes_\Gamma \mathbb{Q})$, where $\Gamma$ is a finite-index subgroup of $\SL$. One has
$$
H_{d(n)}(Sh_\bullet \otimes_\Gamma \mathbb{Q})
\cong
H_{d(n)}(\Gamma,\mathrm{St}_n).
$$
Given a perfect triangulation $\mathcal{S}$ of $\Sigma_{d(n)}$, they construct a specific family of sharblies such that
$$
z_\Gamma = \sum_{s\in\mathcal{S}} [s]_\Gamma + \sum_{\alpha} [y_\alpha]_\Gamma
$$
is a nontrivial cycle. The argument relies on properties of regular triangulations of polytopes and on the Voronoi tessellation

For $n=2,3$, where there is only one simplicial perfect form $\mathbb{A}_n$, $z_\Gamma$ coincides with the top cycle given in Theorem~\ref{top_cycle}.

In contrast, the Voronoi complex satisfies $V_k=0$ for $k>d(n)$, whereas the Sharbly complex has non-vanishing groups in higher degrees. Therefore, in the Sharbly setting the non-triviality of $z_\Gamma$ requires additional arguments. 

Moreover, the cycle $z_\Gamma$ depends on a series of auxiliary choices, while in the Voronoi complex the top cycle is canonically determined by the structural rigidity of the Voronoi tessellation. Another crucial difference is that the Voronoi complex is finite, whereas $Sh_\bullet \otimes_\Gamma \mathbb{Q}$ is infinite.
\end{remark}

As a consequence of the proof of Theorem~\ref{top_cycle}, we can prove the following statement:
\begin{corollary}
\label{no_top_cycle}
Let $n\in\mathbb{N}$, $n>0$ and  even. We have  $$H_{d(n)}(\operatorname{Vor}_{\GL}\otimes \mathbb{Q}) = 0.$$
\begin{proof}
By Proposition~\ref{deter_action}, we know that $\sigma\in\Sigma_{d(n)}$ if and only if $\Gamma_\sigma\subset\SL$. For instance, this is not the case for the family of root lattices, as the Weyl groups is a subset of its automorphism group. Thus, $\Sigma_{d(n)}\subsetneq \Sigma_{d(n)}^*$. 

Suppose that there exists $\sigma\in\Sigma_{d(n)}$, if not the statement follows trivially. In particular, $\sigma$ is the Voronoi Domain of a non-root lattice. Therefore, we can assume that $n\geq 6$. Then, by Theorem ~\ref{vor_graph_connected},  $|\nfacets|>0$. By Lemma~\ref{stab_groups}, every $\tau\in\nfacets$ is equivalent to an element in $\nrfacets$. By the argument in the proof of Theorem~\ref{top_cycle}, if there exits $\alpha\in V_{d(n)}$ such that $d_{d(n)}(\alpha) = 0$, then $\alpha = 0$.
\end{proof}
\end{corollary}
\begin{remark} Through Borel-Serre duality (and the equivariant spectral sequence associated to the Voronoi complex), Theorem \ref{top_cycle} recovers that $H^0(\Gamma,\mathbb{Q})\cong \mathbb{Q}$ when $\Gamma$ is a finite index subgroup of $\SL$. This is also the case for finite index subgroups of $\GLo$ in the Hermitian case, see \cite[Theorem 3.7]{hermitian}. On the other hand, when $\Gamma=\GL$, Corollary~\ref{no_top_cycle}  recovers that $H^0(\GL, \tilde{\mathbb{Z}})\otimes \mathbb{Q}=0$. Where $\tilde{\mathbb{Z}}$ is an orientation module associated to the action of $\GL$ on the cone of symmetric quadratic form over $\mathbb{R}$, see \cite[Section 7.2]{philippe_advances} for further details. 
\end{remark}
\section*{Acknowledgments}
\thanks{\small The author is fully supported by the COGENT project which has received funding from the European Union's Horizon Europe Programme under the Marie Sklodowska-Curie actions HORIZON-MSCA-2023-DN-01 call (Grant agreement ID: 101169527), and from UK Research and Innovation. The author thanks Philippe Elbaz-Vincent for introducing him to this problem, for his guidance, and for helpful discussions throughout the development of this work, and Gabriel Jalil for valuable discussions.}
\newpage
\bibliographystyle{plain}  
\bibliography{referencias}    

@article{Jaquet_rank_7,
     author = {Jaquet-Chiffelle, David-Olivier},
     title = {\'{E}num\'eration compl\`ete des classes de formes parfaites en dimension 7},
     journal = {Annales de l'Institut Fourier},
     pages = {21--55},
     year = {1993},
     publisher = {Institut Fourier},
     address = {Grenoble},
     volume = {43},
     number = {1},
     doi = {10.5802/aif.1320},
     mrnumber = {94d:11048},
     zbl = {0769.11028},
     language = {fr},
     url = {https://www.numdam.org/articles/10.5802/aif.1320/}
}

@article{philippe_advances,
title = {Perfect forms, {K}-theory and the cohomology of modular groups},
journal = {Advances in Mathematics},
volume = {245},
pages = {587-624},
year = {2013},
issn = {0001-8708},
doi = {https://doi.org/10.1016/j.aim.2013.06.014},
url = {https://www.sciencedirect.com/science/article/pii/S0001870813002223},
author = {Philippe Elbaz-Vincent and Herbert Gangl and Christophe Soulé}
}

@article{Voronoi1908,
author = {Voronoi, Georges},
journal = {Journal für die reine und angewandte Mathematik},
pages = {198-287},
title = {Nouvelles applications des paramètres continus à la théorie des formes quadratiques. Deuxième mémoire. Recherches sur les parallélloèdres primitifs.},
url = {http://eudml.org/doc/149291},
volume = {134},
year = {1908},
}

@book{euclidean_lattices_martinet,
author = {Martinet, Jacques},
year = {2003},
month = {01},
pages = {},
title = {Perfect Lattices in Euclidean Spaces},
volume = {327},
isbn = {978-3-642-07921-4},
doi = {10.1007/978-3-662-05167-2}
}

@article{hermitian,
title = {On the cohomology of linear groups over imaginary quadratic fields},
journal = {Journal of Pure and Applied Algebra},
volume = {220},
number = {7},
pages = {2564-2589},
year = {2016},
issn = {0022-4049},
doi = {https://doi.org/10.1016/j.jpaa.2015.12.002},
url = {https://www.sciencedirect.com/science/article/pii/S002240491500345X},
author = {Mathieu {Dutour Sikirić} and Herbert Gangl and Paul E. Gunnells and Jonathan Hanke and Achill Schürmann and Dan Yasaki}
}

@article{2002,
  title = {Some computations of the homology of {GL}$_N(\mathbb{Z})$ and the {K}-theory of $\mathbb{Z}$},
  author={Elbaz-Vincent, P and Gangl, H and Soule, C},
  journal={Comptes Rendus Mathematique},
  volume={335},
  number={4},
  pages={321--324},
  year={2002},
  publisher={EDITIONS SCIENTIFIQUES MEDICALES ELSEVIER 23 RUE LINOIS, 75724 PARIS CEDEX~…}
}

@article{Bordeaux,
     author = {Kenji Okuda and Syouji Yano},
     title = {A generalization of {Vorono{\"\i}{\textquoteright}s} {Theorem} to algebraic lattices},
     journal = {Journal de th\'eorie des nombres de Bordeaux},
     pages = {727--740},
     year = {2010},
     publisher = {Universit\'e Bordeaux 1},
     volume = {22},
     number = {3},
     doi = {10.5802/jtnb.742},
     zbl = {1253.11072},
     mrnumber = {2769341},
     language = {en},
     url = {https://jtnb.centre-mersenne.org/articles/10.5802/jtnb.742/}
}

@article{K8, 
    title = {Voronoi complexes in higher dimensions, cohomology of {GL}$_{N}(\mathbb{Z})$ for $N \geq 8$ and the triviality of {K}$_8(\mathbb{Z})$},
    volume={25},
    DOI={10.1017/S1474748025101394}, 
    number={1}, 
    journal={Journal of the Institute of Mathematics of Jussieu}, 
    author={Dutour Sikirić, Mathieu and Elbaz-Vincent, Philippe and Kupers, Alexander and Martinet, Jacques}, 
    year={2026}, 
    pages={565–590}
}

@article{Sharbly,
  author    = {Avner Ash and Paul E. Gunnells and Mark McConnell},
  title     = {Explicit sharbly cycles at the virtual cohomological dimension for $\textrm{SL}_n(\mathbb{Z})$},
  journal   = {Journal of Homotopy and Related Structures},
  year      = {2025},
  volume    = {20},
  number    = {3},
  pages     = {391--416},
  doi       = {10.1007/s40062-025-00374-9},
  url       = {https://doi.org/10.1007/s40062-025-00374-9},
  issn      = {1512-2891}
}

@article{Borel,
  author = {Borel, A. and Serre, J.-P.},
  title = {Corners and Arithmetic Groups},
  journal = {Commentarii Mathematici Helvetici},
  volume = {48},
  year = {1973},
  pages = {436--483},
  url = {http://eudml.org/doc/139559}
}

@article{Venkatesh2016,
  author    = {Akshay Venkatesh},
  title     = {Cohomology of arithmetic groups and periods of automorphic forms},
  journal   = {Japanese Journal of Mathematics},
  year      = {2017},
  volume    = {12},
  number    = {1},
  pages     = {1--32},
  doi       = {10.1007/s11537-016-1488-2},
  url       = {https://doi.org/10.1007/s11537-016-1488-2},
  issn      = {1861-3624}
}

@article{VenkateshICM2018,
author = {Akshay Venkatesh},
title = {Cohomology of Arithmetic Groups- Fields Medal Lecture},
journal = {Proceedings of the International Congress of Mathematicians (ICM 2018)},
chapter = {},
pages = {267-300},
doi = {10.1142/9789813272880_0014},
URL = {https://www.worldscientific.com/doi/abs/10.1142/9789813272880_0014},
eprint = {https://www.worldscientific.com/doi/pdf/10.1142/9789813272880_0014}
}

@book{Harder2025,
  author    = {Günter Harder},
  title     = {Cohomology of Arithmetic Groups},
  publisher = {Springer},
  address   = {Cham},
  year      = {2026},
  series    = {Springer Monographs in Mathematics},
  doi       = {10.1007/978-3-032-10378-9},
  isbn      = {978-3-032-10378-9},
  url       = {https://doi.org/10.1007/978-3-032-10378-9}
}

@article{Soule2000,
title = {On the 3-torsion in {K}$_4(\mathbb{Z})$},
journal = {Topology},
volume = {39},
number = {2},
pages = {259-265},
year = {2000},
issn = {0040-9383},
doi = {https://doi.org/10.1016/S0040-9383(99)00006-3},
url = {https://www.sciencedirect.com/science/article/pii/S0040938399000063},
author = {C. Soulé}
}

@article{general_setting,
title = {Computing in arithmetic groups with {V}oronoï's algorithm},
journal = {Journal of Algebra},
volume = {435},
pages = {263-285},
year = {2015},
issn = {0021-8693},
doi = {https://doi.org/10.1016/j.jalgebra.2015.01.022},
url = {https://www.sciencedirect.com/science/article/pii/S0021869315000617},
author = {Oliver Braun and Renaud Coulangeon and Gabriele Nebe and Sebastian Schönnenbeck}
}

\end{document}